\documentclass[11pt,letterpaper]{article} 
\usepackage[margin=1in]{geometry}
\usepackage{xcolor}
\usepackage{multicol}
\usepackage{listings}% http://ctan.org/pkg/listings
\usepackage{amsmath, amsthm, amssymb, wasysym, verbatim, bbm, color, graphics}
\usepackage{url}
\usepackage{mathtools}
\usepackage{hyperref}
\usepackage{cleveref}

\usepackage{enumitem} % list in roman
\usepackage{psfrag} %for figures

\usepackage{hyperref}
\hypersetup{colorlinks,linkcolor=blue}
\usepackage{romannum}

\usepackage{academicons}
\definecolor{orcidlogocol}{HTML}{A6CE39}
\usepackage{lineno}
\newcommand{\orcidicon}{
  \includegraphics[height=1.8ex]{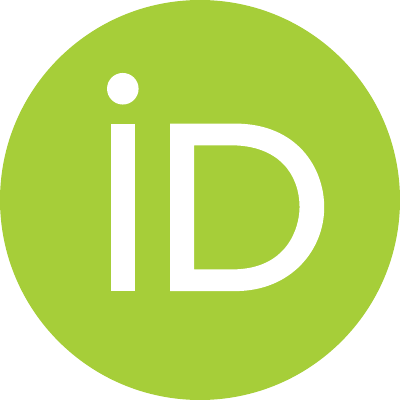}
}

\usepackage{fancyhdr}
\newlength\FHoffset
\fancyheadoffset{\FHoffset}

\newlength\FHleft
\newlength\FHright

\renewcommand{\headrulewidth}{1.0pt} 
\newbox\FHline
\setbox\FHline=\hbox{\hsize=\paperwidth%
  \hspace*{\FHleft}%
  \rule{\dimexpr\headwidth-\FHleft-\FHright\relax}{\headrulewidth}\hspace*{\FHright}%
}

\title{\textbf{
Mixing and Small-Scale Formation in a Passive Divergence-Free Vector Field}}
\author{
\textbf{Anuj Kumar}\thanks{Department of Mathematics, University of California Davis, CA 95616, USA. \textit{Email:}  \href{mailto:anku@ucdavis.edu}{anku@ucdavis.edu}.
\\ \hspace*{0.5cm} \href{https://orcid.org/0000-0002-9203-9177}{\orcidicon orcid.org/0000-0002-9203-9177}
}
\and
\textbf{Franziska Weber}\thanks{Department of Mathematics, University of California Berkeley, CA 94720, USA. \textit{Email:}  \href{mailto:fweber@math.berkeley.edu}{fweber@berkeley.edu}.
\\ \hspace*{0.5cm} \href{https://orcid.org/0000-0002-7159-0170}{\orcidicon orcid.org/0000-0002-7159-0170}\\
F.W.'s work was supported in part by the U.S. Department of Energy, Office of Science, Office of Advanced Scientific Computing Research's Applied Mathematics Competitive Portfolios program under Contract No. AC02-05CH11231 and by National Science Foundation award DMS 2438083.\\
A.K. and F.W. also acknowledge Savio computational cluster resources provided by the Berkeley Research Computing program at the University of California, Berkeley (supported by the UC Berkeley Chancellor, Vice Chancellor for Research, and Chief Information Officer).
}
}

\date{}

\usepackage{xfrac}
\usepackage{float}
\usepackage[title]{appendix}
\usepackage{todonotes}
\usepackage{subcaption}

\newcommand{\R}{\mathbb{R}}

\newcommand{\Z}{\mathbb{Z}}
\newcommand{\N}{\mathbb{N}}

\newcommand{\T}{\mathbb{T}}

\newcommand{\p}{\mathbf{P}}

\newcommand\norm[1]{\left\lVert#1\right\rVert}
\newcommand{\Grad}{\nabla}
\newcommand{\Div}{\operatorname{div}}
\newcommand{\dom}{\T^d}

\newtheorem{lemma}{Lemma}[section]

\newtheorem{theorem}[lemma]{Theorem}
\newtheorem{remark}[lemma]{Remark}

\newtheorem*{maintheorem*}{Main Theorem}
\theoremstyle{definition}{}

\theoremstyle{note}{
\newtheorem*{claim*}{Claim}}

\allowdisplaybreaks

\numberwithin{equation}{section}

\begin{document}
 \pagenumbering{arabic}
 \maketitle
\begin{abstract}
We study mixing for a divergence-free passive vector field $u$ transported by another divergence-free vector field $U$, where $u$ evolves according to
$ \partial_t u + (U \cdot \nabla) u + \nabla p = 0.$
In recent years, a lot of attention has been given to the question of optimal mixing in the scalar case, where there is a Sobolev constraint on the advecting velocity. In the vector setting considered here, however, the pressure term introduces substantial difficulties, since the simple Lagrangian perspective available in the scalar case is no longer applicable.

In this paper, we investigate mixing on a torus $\mathbb{T}^d$ under the assumption that the field $U$ satisfies
$ \|U(t)\|_{W^{1,q}} \leq C $
and we quantify mixing through the decay of the homogeneous $ H^{-\alpha}$ norm of $u$. We start with establishing conditions on $U$ that guarantee existence and uniqueness of solutions. We then derive lower bounds on the mixing rate for various ranges of $q$ and $\alpha$. In addition, we carry out numerical simulations of mixing by choosing, at each time instant, a field $U$ that maximizes the instantaneous decay of the $ H^{-\alpha}$ norm. These simulations provide evidence that the optimal mixing rate is at least exponential in time. More broadly, we view the present model and its diffusive analogue, as a useful framework for probing mechanisms of small-scale formation in divergence-free vector fields and for formulating simplified versions of open questions related to the incompressible Euler and Navier--Stokes equations.
\end{abstract}

\section{Introduction}

Mixing plays a crucial role both in engineering and in natural fluid flows. In chemical engineering, for example, mixing facilitates the desired reaction rates by ensuring thorough interaction of substances \cite{ottino1994mixing}. It also prevents stratification and leads to homogeneous fluid properties, which is crucial in applications like heat exchangers and chemical reactors \cite{paul2004handbook}. In the oceans, mixing drives the meridional overturning circulation, which in turn, is responsible for the state of climate \cite{munk1998abyssal, wunsch2004vertical}. Furthermore, mixing in oceans is vital for the upward transport of nutrients from the seafloor to the surface, an essential component for plankton growth \cite{caldwell1995turbulence}.

Motivated by the considerations above, a natural question from a mathematical perspective is how fast a scalar can be mixed for a given budget of the incompressible advecting vector field $U$. In the past two decades, this question has gained a lot of attention, particularly in the case where the advecting vector field $U$ belongs to the Sobolev space $W^{1, q}$ for $q \in [1, \infty]$. Lin, Thiffeault and Doering investigated this question using numerical methods \cite{Lin2011}. They demonstrated an exponential mixing rate when the velocity field is constrained in $W^{1, 2}$. They quantified mixing through the decay of the $H^{-1}$-norm of the scalar. Crippa and De Lellis \cite{crippa2008regularity}, using a Lagrangian approach, rigorously established an exponentially decaying lower bound for the mixing rate (see also Seis \cite{seis2013maximal} and Iyer, Kiselev \& Xu \cite{iyer2014lower}) when $U \in W^{1, q}$ for $q > 1$, making an important progress on Bressan’s conjecture \cite{bressan2003lemma}.  Subsequent work by Alberti, Crippa \& Mazzucato \cite{AlbertiCrippaMazzucato19}, Yao \& Zlatos \cite{YaoZlatos17} and Elgindi \& Zlatos \cite{elgindiuniversalmixer} provided constructions of the advecting vector field $W^{1, q}$ for which the scalar exhibits an exponential mixing rate, demonstrating the sharpness of the lower bound established by Crippa and De Lellis.

While small scales in the velocity field are introduced in the scalar setting to produce mixing, a natural next interesting question is how such small scales arise in the velocity field in the first place. From a physical perspective, mixing, or more generally, the generation of small scales in the velocity field, is essential for turbulent flows to exhibit enhanced or anomalous dissipation \cite{sreenivasan1984scaling, frisch1995turbulence}. Although our ultimate goal is to understand the formation of small scales at the level of the incompressible Euler equations, in this paper we restrict our attention to the following model, for several reasons explained below.

The primary purpose of this paper is to investigate mixing of an incompressible vector field $u:\R^+\times\T^d\to \R^d$ governed by the system of equations
\begin{subequations}\label{eq:vectortransport}
\begin{align}\label{eq:uevolution}
    \partial_t u + (U\cdot \Grad) u +\Grad p& = 0,\quad (t,x)\in \R^+\times\T^d,\\
    \label{eq:udivconstraint}
    \Div u & = 0,\quad (t,x)\in \R^+\times\T^d,\\
    u(0,x) &  = u_0(x),\quad x\in \T^d\label{eq:initcond}
\end{align}
\end{subequations}
where $U:\R^+\times\T^d\to \R^d$ is a smooth divergence-free advecting vector field and, for some $q \in [1, \infty]$, satisfies
\begin{equation}
    \label{eq:Ubound}
    \norm{U(t)}_{W^{1,q}(\dom)}\leq C,
\end{equation}
for all $t\in\R^+$ and some constant $C<\infty$, and a suitable mean-zero initial condition $u(0,x)=u_0(x)\in L^2(\dom)$. Here, $\T^d$ is the $d$-dimensional torus of side length $L$.

Next, we explain the general reasons for working with this particular model. We also highlight several useful features, viewpoints and challenges associated with it that ultimately motivated our choice.

\begin{enumerate}[label=(\roman*)]
\item The main reason we study vector mixing in this model, rather than for example the linearized Euler equations around $U$ or the vorticity equation, is that model \eqref{eq:vectortransport} has two important structural properties: (a) it admits a natural energy ($L^2$) balance, and (b) when $u=U$, it reduces to the incompressible Euler equations. Since our ultimate goal is to understand the formation of small scales in Euler (and eventually Navier--Stokes), this provides a natural route: one may first analyze the more flexible model \eqref{eq:vectortransport} and then, in future work, try to close the gap between $U$ and $u$.

\item We view model \eqref{eq:vectortransport} as a more flexible version of the incompressible Euler equations (or of the incompressible Navier--Stokes equations, if one adds the term $\nu \Delta u$ to the right-hand side of \eqref{eq:udivconstraint}). In the fluid equations, one requires $u$ and $U$ to coincide pointwise. In contrast, in \eqref{eq:vectortransport} one may impose only a weaker constraint that $U$ and $u$ are ``structurally similar". For instance, one could impose a constraint of the form $U = \mathbb{P}(u \circ \chi),$
for some $C^1$-diffeomorphism $\chi$, where $\mathbb{P}$ denotes the Leray projector. In this preliminary study, however, we take a more permissive viewpoint that $U$ and $u$ are completely independent.

\item This added flexibility allows one to build and test various plausible mechanisms for generating small scales. For example, one may construct a candidate mechanism in $U$, and then investigate whether these mechanisms induce small-scale formation in $u$. If a proposed mechanism already fails at the level of model \eqref{eq:vectortransport}, then it is unlikely to hold for the Euler equations. In this way, the model serves as a useful testing ground for identifying which mechanisms may truly be relevant.

\item Mathematically, the creation of small scales in equations \eqref{eq:vectortransport} is considerably more challenging than in the scalar transport equation. In the transport equation, the scalar $\theta$ follows the Lagrangian trajectories of the velocity field $U$, and mixing can be induced by generating small scales in $U$. This idea has been used extensively in the study of anomalous dissipation in passive scalars \cite{drivas22anomdissp, elgindi2023norm}, optimal mixing of scalars \cite{YaoZlatos17, AlbertiCrippaMazzucato19}, optimal scalar transport \cite{ doering2019optimal, kumar2022three, tobasco2021optimal}, and even problems related to the nonuniqueness of solutions to the transport equation \cite{depauw2003non, kumar2023nonuniqueness, brue2024sharp}. For the vector equation considered here, however, such an approach is no longer available. The pressure term introduces a substantial new difficulty and renders the Lagrangian viewpoint essentially inapplicable. At present, it is not even clear whether the pressure enhances or suppresses the formation of small scales. The framework studied in this paper provides a way to isolate and investigate the role of pressure, without the additional complications introduced by nonlinear terms.

\item Although in this paper we restrict our attention to mixing, we believe that model \eqref{eq:vectortransport} may serve a broader purpose, namely, as a framework for probing conjectures related to the incompressible Euler and Navier--Stokes equations, for instance those concerning anomalous dissipation or singularity formation. Broadly speaking, one may ask the following question for model \eqref{eq:vectortransport} (and, respectively, for the model with the added diffusion term $\nu\Delta u$): if one endows $U$ with certain conjectured properties, do corresponding properties emerge in $u$? If this question cannot be answered even at the level of \eqref{eq:vectortransport} (or its diffusive analogue), then resolving the original conjecture for Euler (or Navier--Stokes) is likely to be substantially more difficult. We outline two such questions of this type in the concluding section.
\end{enumerate}

Once the model has been fixed, we now describe the main objectives of the paper and the questions we address.

Our goal is to investigate the maximal rate of mixing of the vector field \(u\) under a prescribed Sobolev \(W^{1,q}\) budget on the advecting velocity field \(U\). As in the scalar setting \cite{Lin2011, thiffeault2012using, iyer2014lower, YaoZlatos17, AlbertiCrippaMazzucato19}, we quantify mixing by means of the homogeneous norm \({H}^{-\alpha}\), for \(1/2 \leq \alpha\leq 1\), namely \(\|u(t,\cdot)\|_{H^{-\alpha}}\). More precisely, for a given \(W^{1,q}\) budget on \(U\), our goal is to study
\[
\inf_{\substack{\|U\|_{W^{1,q}} \leq 1 \\ \Div U = 0}}
\|u(t,\cdot)\|_{{H}^{-\alpha}}
\]
as a function of time. In this framework, decay of the \({H}^{-\alpha}\) norm of \(u\) is interpreted as a measure of the rate at which \(u\) generates small scales.

In \Cref{sec: existence}, we establish conditions on $U$ that ensure existence, uniqueness, and stability of solutions $u$ to \eqref{eq:vectortransport}. In \Cref{sec: Mixing estimates}, we study the mixing rate and, in particular, derive lower bounds for mixing for various ranges of $q$ and $\alpha$. In \Cref{sec: Numerical algorithm}, we focus on the case of a $W^{1,2}$ budget and perform numerical simulations of optimal mixing by identifying vector fields $U$ that instantaneously minimize the $\| \cdot \|_{{H}^{-\alpha}}$ norm of $u$. Finally, in \Cref{sec: Discussion}, we conclude with a discussion of some directions for future work.

\section{Existence of solutions}
\label{sec: existence}
We start by showing that solutions exist to system~\eqref{eq:vectortransport}. We do so via an approximation by a standard spectral method that will also be used later on to carry out numerical simulations. To define the method, we consider the Fourier transform for $f(t)\in L^2(\T^d)$, 
\begin{equation}
	\label{eq:fourier}
	f(t, x)=\sum_{k \in \mathbb{Z}} \hat{f}_k(t)e^{i2\pi \frac{k\cdot x}{L}},\quad \hat{f}_k(t) = \frac{1}{ L^d}\int_{\T^d} f(t,x)e^{-i2\pi \frac{k\cdot x}{L}} \, {\rm d}x.
\end{equation}

Given a velocity field $U$, we define its
truncation up to $N\in \N$ modes as:
\begin{equation}
	\label{eq:Utrunc}
	P_N U(t,x):= U_N(t,x) = \sum_{|k|_\infty\leq N} \hat{U}_k(t) e^{i2\pi \frac{k\cdot x}{L}},
\end{equation}
where $|k|_\infty = \max_{j=1,\dots, d}|k_j|$. Then, we can define the following spectral scheme: 
For $N\in \N$, we let $u_N(t,x)=\sum_{|k|_\infty\leq N}\hat{u}_k^N(t)e^{i2\pi \frac{k\cdot x}{L}}$, where the $\hat{u}_k^N$ are the solution of the following Cauchy problem:
\begin{subequations}
	\label{eq:numscheme}
	\begin{align}
	\partial_t u_N +\p_N((U_N\cdot \Grad)u_N) &= 0,\\
	u_N(0)= \p_N u_0,\label{eq:initapprox}
	\end{align}
\end{subequations}
here $\p_N$ is the projection onto divergence free functions and Fourier modes with frequency less than $N$, i.e.,
\begin{equation}
	\label{eq:fourierprojection}
	\p_N f(t, x) = \sum_{|k|_{\infty}\leq N} \left(\hat{f}_k(t) -\frac{\hat{f}_k(t)\cdot k}{|k|^2}k\right) e^{i2\pi \frac{k\cdot x}{L}},\quad \hat{f}_k(t) = \frac{1}{ L^d} \int_{\T^d} f(t,x)e^{-i2\pi \frac{k\cdot x}{L}} dx.
\end{equation}
We note that $\p_N$ is self-adjoint,
\begin{equation*}
	\int_{\T^d}\p_N f\cdot g dx = \int_{\T^d} f\cdot \p_N g dx,
\end{equation*}
and satisfies $\p_N^2 = \p_N$.
We also note that since
\begin{equation*}
	\Div v = \frac{i 2\pi}{L}\sum_k (\hat{v}_k\cdot k)e^{i2\pi \frac{k\cdot x}{L}}, 
\end{equation*}
we have that $v$ is divergence free if and only if its Fourier coefficients satisfy $k\perp \hat{v}_k$ for all $k$. 
We also note that the projection onto the first $N$ Fourier modes satisfies for sufficiently smooth functions, $s\geq 0$, 
\begin{equation}
	\norm{(I-P_N) f}_{H^s}\leq C_s \psi(N)\stackrel{N\to\infty}{\longrightarrow} 0.
\end{equation}
Here we denoted by $H^s$ for $s\in\R$ the homogeneous fractional Sobolev space norm (with the convention that $H^0:=L^2$):
\begin{equation*}
    \norm{f}_{H^s(\dom)}: = \left(\frac{2\pi}{L}\right)^\alpha\norm{\sum_{k\in \Z^d}|k|^\alpha \hat{f}_k e^{i\frac{2\pi}{L}k\cdot x}}_{L^2(\dom)}.
\end{equation*}
Now we can show the following:
\begin{lemma}[Existence of solutions]
	\label{lem:existence}
	A weak solution $u$ of the equations~\eqref{eq:vectortransport} exists for any initial data $u_0\in L^2_{\Div}(\dom)$, in the sense that $u\in L^\infty(\R^+;L^2_{\Div}(\dom))$ satisfies
	\begin{equation}\label{eq:weakform}
		\int_{\R^+}\int_{\dom} \left(u\cdot \partial_t \varphi + [(U\cdot\Grad) \varphi ]\cdot u\right) dx dt+\int_{\dom} u_0\cdot \varphi(0,x) dx =0,
	\end{equation}
	for any smooth and compactly in time supported test function $\varphi \in C_c^\infty(\R^+\times\dom)$.
	In addition, the time derivative $\partial_t u$ satisfies
	\begin{equation*}
		\partial_t u \in \begin{cases}
			L^\infty(\R^+;H^{-1}(\dom)),\quad 	q>d,\\
			L^\infty(\R^+;W^{-1,\frac{2dq}{dq+2d-2q}}(\dom)),\quad q<d.
		\end{cases}
	\end{equation*}
\end{lemma}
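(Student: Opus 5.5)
The plan is to run a Galerkin compactness argument on the spectral scheme \eqref{eq:numscheme}. For each fixed $N$, \eqref{eq:numscheme} is a finite system of linear ODEs for the coefficients $\hat u^N_k(t)$, $|k|_\infty\le N$. Its coefficients are given by the Fourier coefficients of $U_N$, which are bounded and measurable in time because $\|U(t)\|_{W^{1,q}}\le C$. Linear ODE theory (Carathéodory if $U$ is merely measurable in time) therefore gives a global solution, provided we exclude blow-up. We exclude it with the energy estimate. Test \eqref{eq:numscheme} with $u_N$ and use that $\p_N$ is self-adjoint and $\p_N u_N=u_N$. Then
\[
\frac12\frac{d}{dt}\|u_N\|_{L^2}^2=-\int_{\dom}(U_N\cdot\Grad)u_N\cdot u_N\,dx=\frac12\int_{\dom}(\Div U_N)|u_N|^2\,dx=0,
\]
since $U_N$ is divergence free (truncation commutes with divergence). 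Hence $\|u_N(t)\|_{L^2}=\|\p_N u_0\|_{L^2}\le\|u_0\|_{L^2}$ for all $t$. So $u_N$ is bounded in $L^\infty(\R^+;L^2_{\Div})$, and Banach--Alaoglu gives a subsequence $u_N\stackrel{*}{\rightharpoonup}u$ in $L^\infty(\R^+;L^2_{\Div})$.

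Next comes the bound on $\partial_t u_N$, uniform in $N$. Since $\Div U_N=0$ we may write $(U_N\cdot\Grad)u_N=\Div(u_N\otimes U_N)$, so it suffices to bound $u_N\otimes U_N$ in $L^r$ for the appropriate $r$. For $q>d$, Sobolev/Morrey gives $\|U_N\|_{L^\infty}\lesssim\|U\|_{W^{1,q}}$; one needs a uniform-in-$N$ bound of $P_N$ on $W^{1,q}$, which holds for the cube truncation $|k|_\infty\le N$ when $1<q<\infty$ by boundedness of the Dirichlet square partial sums (for $q=\infty$ use $q'<\infty$ or a Fejér-type modification). This gives $u_N\otimes U_N\in L^2$ uniformly, hence $\partial_t u_N=-\p_N\Div(u_N\otimes U_N)$ is bounded in $L^\infty(H^{-1})$, using that $\p_N$ is bounded on $H^{-1}$. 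For $q<d$ we have $U_N\in L^{q^*}$ with $q^*=dq/(d-q)$. Hölder then gives $u_N\otimes U_N\in L^r$ with $1/r=1/2+(d-q)/(dq)=(dq+2d-2q)/(2dq)$, matching the exponent in the statement. Thus $\partial_t u_N$ is bounded in $L^\infty(W^{-1,r})$, using boundedness of the Leray projector and $P_N$ on $L^r$ (we need $r>1$, which holds for $q>2d/(d+2)$; otherwise interpret via duality). These bounds pass to the limit by lower semicontinuity under weak-$*$ convergence.

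Finally, we pass to the limit in the weak form. Test \eqref{eq:numscheme} against $\p_N\varphi$ (equivalently $\varphi$, using self-adjointness) and integrate by parts in $t$ and $x$. The linear terms $\int u_N\cdot\partial_t\varphi$ and $\int \p_N u_0\cdot\varphi(0)$ converge directly. The delicate term is $\int [(U_N\cdot\Grad)\p_N\varphi]\cdot u_N$, and I expect this step to be the main obstacle. Here $u_N$ converges only weakly, so I would split
\[
(U_N\cdot\Grad)\p_N\varphi=(U\cdot\Grad)\varphi+\big[(U_N-U)\cdot\Grad\p_N\varphi+U\cdot\Grad(\p_N\varphi-\varphi)\big]
\]
and show the bracket tends to $0$ strongly in $L^1(\R^+;L^2)$ on the compact time support. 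The second piece is easy since $\p_N\varphi\to\varphi$ in $C^1$ for smooth divergence-free $\varphi$. (For general $\varphi$ the pressure is absent, so one should take test functions that are divergence free, or note that $\p_N\varphi$ only sees the divergence-free part.) The first piece needs $U_N\to U$ in $L^2_x$ for a.e.\ $t$, which holds when $W^{1,q}\subset L^2$, and dominated convergence in $t$ via the uniform bound. When $W^{1,q}\not\subset L^2$, I would instead pair $u_N\otimes U_N$ with $\Grad\varphi$ in the $L^r$--$L^{r'}$ duality above. Strong convergence $U_N\to U$ in $L^{q^*}$ suffices there, combined with weak convergence of $u_N$ in $L^2$. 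Alternatively, one can use Aubin--Lions with the $\partial_t u_N$ bound to get strong convergence of $u_N$ in $L^2_{loc}(H^{-\epsilon})$ to absorb products. Weak-times-strong convergence then yields \eqref{eq:weakform}.
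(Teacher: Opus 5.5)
Your proposal follows essentially the same route as the paper: Galerkin approximation by the spectral scheme, the $L^2$ energy identity from self-adjointness of $\p_N$ and $\Div U_N=0$, weak-$*$ compactness, and the same H\"older/duality bound on $\partial_t u_N$. Your $\Div(u_N\otimes U_N)\in L^\infty(W^{-1,r})$ is exactly the paper's pairing $\int u_N\cdot(U_N\cdot\Grad)\p_N\varphi$, and your weak--strong limit passage, together with the $L^p$-boundedness of the cube truncation and the Leray projector, makes explicit what the paper attributes to ``linearity''. Your remark that \eqref{eq:weakform} can only hold for divergence-free test functions is correct, since testing with $\Grad\chi$ would force $\Div((U\cdot\Grad)u)=0$, i.e.\ no pressure. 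One slip: your fallback for $W^{1,q}\not\subset L^2$ is vacuous, because that is precisely the regime $r<1$; there you should simply use the standing assumption that $U$ is smooth.
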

\begin{remark}
	Using the integrability of $u$ and $U$, one can use the density of smooth functions to show that~\eqref{eq:weakform} is satisfied for any test function in $L^1(\R^+;H^1(\dom))$ when $q>d$ and $L^1(\R^+;W^{1,\frac{2dq}{dq-2d+2q}}(\dom))$ when $q<d$.
\end{remark}
\begin{proof}
Scheme~\eqref{eq:numscheme} is a system of coupled ODEs for the $\hat{u}^N_k(t)$, $|k|_{\infty}\leq  N$ and since the right hand side expressions are polynomials of $\hat{u}_k^N$, they are locally Lipschitz, and short time existence of a unique solution follows. In order to get existence for arbitrary times, we show a uniform in time and $N$ $L^2$-bound on $u_N(t)$: We multiply the scheme by $u_N$ and integrate over the domain:
\begin{align*}
	\int_{\dom} u_N\cdot \partial_t u_N dx &= -\int_{\dom} u_N\cdot \p_N((U_N\cdot\Grad)u_N) dx\\
	& = - \int_{\dom} \p_N u_N\cdot ((U_N\cdot\Grad)u_N) dx\\
	& = -\int_{\dom} u_N\cdot ((U_N\cdot\Grad)u_N) dx\\
	& = 0.
\end{align*}
Thus 
\begin{equation*}
	\frac12\frac{d}{dt}\norm{u_N(t)}_{L^2}^2= \int_{\dom} u_N\cdot \partial_t u_N dx = 0,
\end{equation*}
and hence the $L^2$-norm of $u_N$ satisfies for any $t\in (0,\infty)$
\begin{equation*}
	\norm{u_N(t)}_{L^2}^2 = \norm{u_N(0)}_{L^2}^2 \leq \norm{u_0}_{L^2}^2.
\end{equation*}
Since $\norm{u_N(t)}_{L^2}^2 =\sum_{|k|_\infty\leq N} |\hat{u}_k^N(t)|^2$, this implies a uniform in time bound on the $\hat{u}_k^N$ and hence we can extend the solution of the ODE to all times. Since this estimate is uniform in $N$, we can also extract a weak* convergent subsequence in $L^\infty_{loc}(\R^+;L^2(\dom))$ that converges to $u\in L^\infty(\R^+;L^2_{\Div})$ which is a weak solution of~\eqref{eq:vectortransport} thanks to the linearity of the equation. That the limit satisfies the divergence constraint almost everywhere follows since it is satisfied for each of the approximations and due to the linearity of the constraint. Due to the weak lower semicontinuity of the limit, we obtain
\begin{equation*}
	\norm{u(t)}_{L^2}\leq \liminf_{N\to \infty}\norm{u_N(t)}_{L^2} \leq \norm{u_0}_{L^2}, \quad \textrm{a.e. }  \, t\in \R^+.
\end{equation*}
 To show the estimate on the time derivative, we consider once more the numerical scheme and multiply it by a suitable test function. When $q>d$, we multiply by a test function

 $\varphi\in L^1(0,T; H^1(\dom))$ and integrate:
 \begin{align*}
 	\left|\int_0^T\int_{\dom}\partial_t u_N \cdot \varphi dxdt\right| & = \left|\int_0^T\int_{\dom}\p_N \varphi \cdot (U_N\cdot \Grad )u_N dx dt\right|\\
 	&  = \left|\int_0^T\int_{\dom}u_N \cdot (U_N\cdot \Grad )\p_N \varphi dx dt \right|\\
 	& \leq \norm{u_N}_{L^\infty_t L^2_x} \norm{U_N}_{L^\infty_{t,x}}\norm{\Grad \p_N \varphi}_{L^1_tL^2_x}\\
 	& \leq C\norm{u_N}_{L^\infty_t L^2_x} \norm{U_N}_{L^\infty_tW^{1,q}_x}\norm{\Grad \varphi}_{L^1_t L^2_x}\\
 	&\leq C \norm{\Grad \varphi}_{L^1_t L^2_x},
 \end{align*}
 thus $\partial_t u_N \in L^\infty (0,T;H^{-1}(\dom))$ uniformly in $N$. If $q<d$, we instead estimate 
 \begin{align*}
 	\left|\int_0^T\int_{\dom}\partial_t u_N \cdot \varphi dxdt\right| & = \left|\int_0^T\int_{\dom}\p_N \varphi \cdot (U_N\cdot \Grad )u_N dx dt\right|\\
 	&  = \left|\int_0^T\int_{\dom}u_N \cdot (U_N\cdot \Grad )\p_N \varphi dx dt \right|\\
 	& \leq \norm{u_N}_{L^\infty_t L^2_x} \norm{U_N}_{L^{q^*}_{t,x}}\norm{\Grad \p_N \varphi}_{L^1_tL^{\frac{2q^*}{q^*-2}}_x}\\
 	& \leq C\norm{u_N}_{L^\infty_t L^2_x} \norm{U_N}_{L^\infty_tW^{1,q}_x}\norm{\Grad \varphi}_{L^1_t L^{\frac{2q^*}{q^*-2}}_x}\\
 	&\leq C \norm{\Grad \varphi}_{L^1_t L^{\frac{2q^*}{q^*-2}}_x},
 \end{align*}
 where $q^* = \frac{dq}{d-q}$ is the Sobolev conjugate of $q$.  Hence in this case $\partial_t u_N \in L^\infty(0,T; W^{-1,\frac{2dq}{dq+2d-2q}}(\dom))$ uniformly in $N$ and thus the same holds for the limit.
\end{proof}
However, we do not know whether these weak solutions are unique or satisfy an energy equality instead of the inequality above. If $U$ has more regularity, we can deduce higher order regularity for $u$ also and show that the solution of~\eqref{eq:vectortransport} is unique.
We start by deriving a uniform $H^1$-norm on the approximations produced by~\eqref{eq:numscheme}:

\begin{lemma}
	\label{lem:h1bound}
	If $U\in L^\infty(\R^+;W^{1,q}(\dom))\cap L^1(\R^+;W^{1,\infty}(\dom))$ and is divergence free, and $u_0\in H^1_{\Div}(\dom)$, then the limit $u_N$ obtained through the spectral scheme~\eqref{eq:numscheme} satisfies for almost every $t\in \R^+$,
	\begin{equation}
		\label{eq:h1bound}
		\norm{\Grad u(t)}_{L^2}\leq \norm{\Grad u_0}_{L^2}\exp\left(\int_0^t \norm{\Grad U(s)}_{L^\infty(\dom)}ds\right).
	\end{equation}
    Moreover, $\partial_t u\in L^1(\R^+;L^2(\dom))$.
\end{lemma}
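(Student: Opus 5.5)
The plan is to derive an $H^1$ energy estimate directly on the spectral approximations $u_N$ of \eqref{eq:numscheme}, uniform in $N$, and then pass to the limit using weak lower semicontinuity, as in the proof of \Cref{lem:existence}. Since $u_N$ is a trigonometric polynomial solving a finite ODE system, all manipulations are legitimate.

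\textbf{Step 1: the energy identity.} Differentiate $\frac12\norm{\Grad u_N}_{L^2}^2$ in time and pair the equation with $-\Delta u_N$. Because $-\Delta$ commutes with $\p_N$ and $\p_N u_N=u_N$, we have $\int \p_N((U_N\cdot\Grad)u_N)\cdot \Delta u_N\,dx=\int (U_N\cdot\Grad)u_N\cdot\Delta u_N\,dx$. Integrating by parts gives
\begin{equation*}
\frac12\frac{d}{dt}\norm{\Grad u_N}_{L^2}^2 = -\sum_{j}\int_{\dom} \partial_j u_N\cdot\partial_j\bigl((U_N\cdot\Grad)u_N\bigr)\,dx .
\end{equation*}
Expanding the derivative yields two terms:
\begin{itemize}
\item the transport term $\int \partial_j u_N\cdot (U_N\cdot\Grad)\partial_j u_N\,dx$, which vanishes because $\Div U_N=0$ (the truncation $P_N$ preserves the divergence-free property);
\item the commutator term $\int \partial_j u_N\cdot((\partial_j U_N)\cdot\Grad)u_N\,dx$, which is bounded by $\norm{\Grad U_N}_{L^\infty}\norm{\Grad u_N}_{L^2}^2$.
\end{itemize}
Hence $\frac{d}{dt}\norm{\Grad u_N}_{L^2}\le \norm{\Grad U_N}_{L^\infty}\norm{\Grad u_N}_{L^2}$. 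Gr\"onwall's inequality, combined with $\norm{\Grad u_N(0)}_{L^2}\le\norm{\Grad u_0}_{L^2}$, gives the bound \eqref{eq:h1bound} for $u_N$, with $\norm{\Grad U_N}_{L^\infty}$ in the exponent.

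\textbf{Step 2: the main obstacle.} In the exponent, $\norm{\Grad U_N}_{L^\infty}$ must be replaced by $\norm{\Grad U}_{L^\infty}$. The sharp truncation $P_N$ is not bounded on $L^\infty$: its norm there grows like $(\log N)^d$. I would handle this in two stages:
\begin{itemize}
\item first, for smooth $U$ (as assumed in the introduction), use $\norm{\Grad U_N - \Grad U}_{L^1_tL^\infty_x}\to 0$, which follows from the stated convergence $\norm{(I-P_N)f}_{H^s}\to0$ with $s>d/2+1$ and Sobolev embedding;
\item then, for general $U$, approximate by mollified fields $U^\varepsilon$ with $\norm{\Grad U^\varepsilon}_{L^\infty}\le\norm{\Grad U}_{L^\infty}$.
\end{itemize}
Alternatively, one can replace $P_N$ applied to $U$ by a Fej\'er-type truncation, which is an $L^\infty$ contraction; this changes nothing in the $L^2$ argument of \Cref{lem:existence}. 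Either way, the exponent converges to, or is dominated by, $\int_0^t\norm{\Grad U(s)}_{L^\infty}ds$, which is finite by the $L^1_tW^{1,\infty}_x$ assumption.

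\textbf{Step 3: passing to the limit.} The bound makes $\Grad u_N$ uniformly bounded in $L^\infty(\R^+;L^2)$. Extract a weak$^*$ convergent subsequence, identify its limit with $u$, and use weak lower semicontinuity to obtain \eqref{eq:h1bound} for almost every $t$.

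\textbf{Step 4: the time derivative.} Since $\p_N$ is an $L^2$ contraction, we have the pointwise-in-time estimate
\begin{equation*}
\norm{\partial_t u_N}_{L^2}\le\norm{U_N}_{L^\infty}\norm{\Grad u_N}_{L^2}.
\end{equation*}
By Step 1, $\norm{\Grad u_N}_{L^2}$ is uniformly bounded. Moreover, $\norm{U_N}_{L^\infty}$ is controlled in $L^1_t$ by $\norm{U}_{W^{1,\infty}}$; this uses the same approximation issue as Step 2, since $U$ is mean-zero or its mean is bounded by the $W^{1,q}$ norm. This gives a uniform $L^1(\R^+;L^2)$ bound on $\partial_t u_N$. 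The bound passes to the limit, for instance by duality against $L^\infty_tL^2_x$ test functions, yielding $\partial_t u\in L^1(\R^+;L^2(\dom))$.
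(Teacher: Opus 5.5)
Your proposal is correct and follows the paper's proof: pair the scheme with $-\Delta u_N$, use $\p_N u_N=u_N$ and $\Div U_N=0$ so that only the term $\int \Grad u_N : ((\Grad U_N\cdot\Grad)u_N)\,dx$ survives, apply Gr\"onwall, pass to the limit, and bound $\norm{\partial_t u_N}_{L^2}\le\norm{U_N}_{L^\infty}\norm{\Grad u_N}_{L^2}$. Your Step~2 and your use of $\norm{U(t)}_{L^\infty}\in L^1_t$ in Step~4 settle two points the paper passes over: it replaces $\norm{\Grad U_N}_{L^\infty}$ by $\norm{\Grad U}_{L^\infty}$ without comment even though $P_N$ is unbounded on $L^\infty$, and it records only a pointwise bound on $\norm{\partial_t u_N}_{L^2}$ before concluding integrability on all of $\R^+$; the one thing to change is your final ``duality'' step, which by itself only yields an $L^2$-valued measure in time, so instead read off $\partial_t u=-\mathbb{P}((U\cdot\Grad)u)\in L^1(\R^+;L^2)$ from the weak formulation and the $H^1$ bound.
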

\begin{proof}
	We derive the bound at the level of the approximations $u_N$, and can then, since it is uniform in $N$, pass to the limit $N\to\infty$ to obtain a bound for the limit.
	To derive an estimate on the gradient of $u_N$, we multiply the scheme~\eqref{eq:numscheme} by $-\Delta u_N$, integrate, and integrate by parts to obtain:
	\begin{align*}
		\frac12\frac{d}{dt}\norm{\Grad u_N}_{L^2}^2& = -\int_{\dom}\Delta u_N\partial_t u_N dx\\
		& = \int_{\dom} \Delta u_N \cdot \p_N ((U_N\cdot \Grad)u_N) dx \\
		& = \int_{\dom} \Delta \p_N u_N \cdot  ((U_N\cdot \Grad)u_N) dx \\
		& = \int_{\dom} \Delta u_N \cdot  ((U_N\cdot \Grad)u_N) dx \\
		& = - \int_{\dom} \Grad u_N : \Grad ((U_N\cdot \Grad)u_N) dx \\
		& = - \int_{\dom} \Grad u_N : ((\Grad U_N\cdot \Grad)u_N) dx - \int_{\dom} \Grad u_N : (( U_N\cdot \Grad)\Grad u_N) dx \\
		& = - \int_{\dom} \Grad u_N : ((\Grad U_N\cdot \Grad)u_N) dx  \\
		& \leq \norm{\Grad u_N}_{L^2}^2 \norm{\Grad U_N}_{L^\infty}.
	\end{align*}
	Using Gr\"onwall's inequality and $\norm{\Grad u_N(0)}_{L^2}\leq \norm{\Grad u_0}_{L^2}$, we obtain
	\begin{equation*}
		\norm{\Grad u_N(t)}_{L^2}^2\leq \norm{\Grad u_0}_{L^2}^2 \exp\left(2\int_0^t \norm{\Grad U_N(s)}_{L^\infty}ds\right),
	\end{equation*}
	thus if
	\begin{equation*}
		\int_0^\infty\norm{\Grad U(s)}_{L^\infty} ds \leq C,\quad\text{and}\quad \norm{u_0}_{H^1}\leq C,
	\end{equation*}
	we obtain that $u_N\in L^\infty(0,\infty;H^1(\dom))$ uniformly in $N$. Using the $H^1$-regularity of $u_N$ and the Lipschitz continuity of  $U$, we can also deduce a better estimate on the time derivative by squaring the equation~\eqref{eq:numscheme} and integrating over space and then estimating:
	\begin{equation*}
		\norm{\partial_t u_N(t)} _{L^2}   \leq \norm{U_N(t)}_{L^\infty}\norm{\Grad u_N(t)}_{L^2}\leq C,
	\end{equation*}
	thus in this case $\partial_t u_N\in L^1(\R^+;L^2(\dom))$, uniformly in $N$.
\end{proof}
 In a similar way, we can derive an estimate on the $H^2$-norm of the limit $u$:
 \begin{lemma}
 	\label{lem:h2estimate}
 		If $U\in L^\infty(\R^+;W^{1,q}(\dom))\cap L^1(\R^+;W^{1,\infty}(\dom))\cap L^1(\R^+;W^{2,3}(\dom))$ and is divergence free, and $u_0\in H^2(\dom)\cap H^1_{\Div}(\dom)$, then the limit $u_N$ obtained through the spectral scheme~\eqref{eq:numscheme} satisfies for almost every $t\in \R^+$,
 	\begin{equation}
 		\label{eq:h2bound}
 		\norm{\Grad^2 u(t)}_{L^2}\leq \norm{\Grad^2 u_0}_{L^2}\exp\left(\int_0^t ( \norm{\Delta U(s)}_{L^3}+ \norm{\Grad U(s)}_{L^\infty} ) ds\right).
 	\end{equation}

 \end{lemma}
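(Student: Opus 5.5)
We argue exactly as in the proof of \Cref{lem:h1bound}: derive the bound for the spectral approximations $u_N$ of \eqref{eq:numscheme}, uniformly in $N$, and pass to the limit by weak-$*$ lower semicontinuity of the $L^2$ norm. To control $\Grad^2 u_N$, we test the scheme with $\Delta^2 u_N$, or equivalently apply $\Delta$ and test with $\Delta u_N$. On the torus $\norm{\Delta f}_{L^2}=\norm{\Grad^2 f}_{L^2}$, so this controls the full Hessian. Since $\Delta$ commutes with $\p_N$ and $\p_N$ is self-adjoint with $\p_N \Delta u_N = \Delta u_N$, the projection drops out as before, and we get
\begin{equation*}
\frac12\frac{d}{dt}\norm{\Delta u_N}_{L^2}^2 = -\int_{\dom} \Delta u_N\cdot \Delta\big((U_N\cdot\Grad)u_N\big)\,dx .
\end{equation*}

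Next we expand $\Delta((U_N\cdot\Grad)u_N) = (\Delta U_N\cdot\Grad)u_N + 2(\Grad U_N:\Grad)\Grad u_N + (U_N\cdot\Grad)\Delta u_N$, where the middle term is $2\,\partial_j U_N^i\,\partial_i\partial_j u_N$. The last term integrates to zero against $\Delta u_N$ because $\Div U_N=0$. The middle term is bounded by $2\norm{\Grad U_N}_{L^\infty}\norm{\Grad^2 u_N}_{L^2}\norm{\Delta u_N}_{L^2}$. For the first term we use H\"older with exponents $(3,6,2)$ and the Sobolev embedding $H^1\hookrightarrow L^6$ for $d\le 3$, applied to the mean-zero function $\Grad u_N$:
\begin{equation*}
\left|\int_{\dom}\Delta u_N\cdot(\Delta U_N\cdot\Grad)u_N\,dx\right| \le \norm{\Delta U_N}_{L^3}\norm{\Grad u_N}_{L^6}\norm{\Delta u_N}_{L^2}\le C\norm{\Delta U_N}_{L^3}\norm{\Grad^2 u_N}_{L^2}^2 .
\end{equation*}
This is what makes the $W^{2,3}$ assumption natural. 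Combining the two bounds gives $\frac{d}{dt}\norm{\Grad^2 u_N}_{L^2}\le C(\norm{\Delta U_N}_{L^3}+\norm{\Grad U_N}_{L^\infty})\norm{\Grad^2 u_N}_{L^2}$. Gr\"onwall's inequality, together with $\norm{\Grad^2 u_N(0)}_{L^2}\le\norm{\Grad^2 u_0}_{L^2}$ and the bounds $\norm{\Delta U_N}_{L^3}\lesssim \norm{\Delta U}_{L^3}$ and $\norm{\Grad U_N}_{L^\infty}\lesssim\norm{\Grad U}_{L^\infty}$ (or convergence as $N\to\infty$), then yields \eqref{eq:h2bound} in the limit.

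The main obstacle is the constants. The statement has coefficient $1$ in the exponent, whereas the naive estimate gives a factor $2$ in front of $\norm{\Grad U}_{L^\infty}$ and a Sobolev constant in front of $\norm{\Delta U}_{L^3}$. I would first try to sharpen the middle term by integrating by parts so that its symmetric part cancels, in the same way the transport term cancels. Failing that, I would read \eqref{eq:h2bound} as holding up to a constant $C$ in the exponent; this does not affect uniform-in-$N$ boundedness or the subsequent uniqueness argument. A second, minor point is that truncation $P_N$ is not uniformly bounded on $L^\infty$ or $L^3$ for the $|k|_\infty$ cube. I would handle this by observing that $U_N\to U$ in $W^{1,\infty}$ and $W^{2,3}$ for sufficiently smooth $U$ (the paper takes $U$ smooth), and by approximating the general case.
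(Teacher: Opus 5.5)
Your proposal is correct and follows the paper's proof essentially step for step: test with $\Delta^2 u_N$, drop $\p_N$ and the transport term, apply H\"older with exponents $(3,6,2)$ plus $H^1\hookrightarrow L^6$, use Gr\"onwall, then let $N\to\infty$. Your caveat about constants is justified, since the paper's own Gr\"onwall step also only gives the bound with a constant $C$ in front (and, strictly, in the exponent), not the coefficient $1$ written in the statement.

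One small correction to your side remark on $P_N$: partial Fourier sums over the cube $|k|_\infty\le N$ are uniformly bounded on $L^p(\dom)$ for $1<p<\infty$ (apply M.~Riesz's theorem in each coordinate). Hence $\norm{\Delta U_N}_{L^3}\le C\norm{\Delta U}_{L^3}$ uniformly in $N$, and only the $L^\infty$ bound on $\Grad U_N$ actually needs your approximation argument.
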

 
 \begin{proof}
 	Again, we do all the estimates at the level of the approximation $u_N$ and then since they are uniform with respect to $N$, we can pass $N\to\infty$ to obtain them for the limit $u$.
 	First we note that, since we are using periodic boundary conditions, we have
 	\begin{equation*}
 		\norm{\Delta f}_{L^2}^2 = \norm{\Grad^2 f}_{L^2}^2,
 	\end{equation*}
 	using integration by parts. Then we multiply the numerical scheme~\eqref{eq:numscheme} by $\Delta^2 u_N$, integrate over the domain and integrate by parts:
 	\begin{align*}
 		\frac12\frac{d}{dt}\norm{\Delta u_N}_{L^2}^2& = \int_{\dom}\Delta^2 u_N\partial_t u_N dx\\
 		& = \int_{\dom} \Delta^2 u_N \cdot \p_N ((U_N\cdot \Grad)u_N) dx \\
 		& = \int_{\dom} \Delta^2 u_N \cdot  ((U_N\cdot \Grad)u_N) dx \\
 		& = \int_{\dom} \Delta u_N \cdot\Delta ((U_N\cdot \Grad)u_N) dx \\
 		& = \int_{\dom} \Delta u_N\cdot \Div((\Grad U_N\cdot \Grad)u_N +(U_N\cdot \Grad)\Grad u_N) dx \\
 		& = \int_{\dom} \Delta u_N\cdot ((\Delta U_N\cdot \Grad)u_N +2(\Grad U_N\cdot \Grad)\Grad u_N+(U_N\cdot \Grad)\Delta u_N) dx \\
 		& = \int_{\dom} \Delta u_N\cdot ((\Delta U_N\cdot \Grad)u_N +2(\Grad U_N\cdot \Grad)\Grad u_N) dx \\
 		&\leq \norm{\Delta u_N}_{L^2} \norm{\Grad u_N}_{L^6}\norm{\Delta U_N}_{L^3} + \norm{\Delta u_N}_{L^2}\norm{\Grad^2 u_N}_{L^2}\norm{\Grad U_N}_{L^\infty}\\
 		& \leq C( \norm{\Delta U_N}_{L^3}+ \norm{\Grad U_N}_{L^\infty} )\norm{\Delta u_N}_{L^2}^2.
 	\end{align*}
 	Again, with Gr\"onwall's inequality, we obtain
 	\begin{equation*}
 		\norm{\Delta u_N(t)}_{L^2}^2\leq C \norm{\Delta u_N(0)}_{L^2}^2 \exp\left(\int_0^t ( \norm{\Delta U_N(s)}_{L^3}+ \norm{\Grad U_N(s)}_{L^\infty} ) ds\right),
 	\end{equation*}
 	thus if $U$ is bounded in $L^1(0,T;W^{1,\infty}(\dom)\cap W^{2,3}(\dom))$, and the initial data is in $H^2$, then we obtain that $u_N$ is uniformly contained in $L^\infty(0,T;H^2(\dom))$.
 \end{proof}

When $U$ is Lipschitz continuous in space and therefore $u$ is $H^1$-regular in space, we can show that the solution is unique and that the spectral approximations $u_N$ converge strongly to $u$ with a convergence rate depending on the approximation quality of the Fourier truncations. 
Let us start by showing uniqueness, which also implies stability.
\begin{lemma}
	\label{lem:stability}
	If $U\in L^\infty(\R^+;W^{1,q}(\dom))\cap L^1(\R^+;W^{1,\infty}(\dom))$ for some $q\in (1,\infty]$, then weak solutions of~\eqref{eq:vectortransport} are unique and satisfy the stability estimate: For weak solutions $u$ and $v$ with initial data $u_0$ and $v_0$ respectively, we have
	\begin{equation*}
		\int_{\dom} |u(t,x)-v(t,x)|^2 dx \leq \int_{\dom} |u_0-v_0|^2 dx.
	\end{equation*}
\end{lemma}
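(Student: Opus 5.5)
By linearity of \eqref{eq:vectortransport}, the difference $w=u-v$ of two weak solutions is itself a weak solution with initial datum $w_0=u_0-v_0\in L^2_{\Div}(\dom)$. It therefore suffices to show that every weak solution $w\in L^\infty(\R^+;L^2_{\Div})$ satisfies $\norm{w(t)}_{L^2}\le\norm{w_0}_{L^2}$ for a.e.\ $t$. Uniqueness then follows by taking $u_0=v_0$. The obstacle is that $w$ is only $L^2$, so we cannot test the equation with $w$ itself. A DiPerna--Lions commutator argument would need $\Grad U\in L^1_tL^2_x$ type control, and we do not want to rely on that. Instead I would use a duality argument, exploiting the regularity theory for smooth data in \Cref{lem:h1bound}.

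\textbf{Step 1 (adjoint problem).} Fix $T>0$ and a smooth divergence-free field $\psi$ with finitely many Fourier modes. Consider the backward problem
\begin{equation*}
\partial_t \phi + (U\cdot\Grad)\phi + \Grad \pi = 0,\quad \Div\phi=0,\quad \phi(T)=\psi,
\end{equation*}
which, after reversing time, has the same form as \eqref{eq:vectortransport} with $-U(T-\cdot)$ in place of $U$. Applying the spectral scheme \eqref{eq:numscheme} together with the proofs of \Cref{lem:existence} and \Cref{lem:h1bound}, we obtain a solution
\begin{equation*}
\phi\in L^\infty(0,T;H^1_{\Div}),\quad \partial_t\phi\in L^1(0,T;L^2),\quad \norm{\phi(t)}_{L^2}\le\norm{\psi}_{L^2}.
\end{equation*}
For smooth $U$, the Galerkin solutions conserve the $L^2$ norm exactly. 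I would therefore prove the estimate first for a mollified $U^\varepsilon$ and then argue that the error terms vanish; a cleaner route is to keep $U$ but use the uniform bound $\norm{\phi(t)}_{L^2}\le\norm{\psi}_{L^2}$ supplied by the weak lower semicontinuity argument.

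\textbf{Step 2 (testing $w$ with $\phi$).} Use $\phi\,\chi_\delta(t)$ as a test function in \eqref{eq:weakform} for $w$, where $\chi_\delta$ is a cutoff approximating $\mathbf 1_{[0,t_0]}$. This choice is admissible by the density remark after \Cref{lem:existence}, since $\phi\in L^1H^1$ and the relevant dual pairings are controlled by $\partial_t w\in L^\infty H^{-1}$ or $L^\infty W^{-1,\cdot}$. Since $w$ is divergence free, the pressure pairs to zero. The equation for $\phi$ tested against $w$ gives
\begin{equation*}
\int_0^T\!\!\int_{\dom} w\cdot\big(\partial_t\phi+(U\cdot\Grad)\phi\big)\chi_\delta\,dx\,dt=0.
\end{equation*}
Combining this with the weak formulation leaves only the boundary terms. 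Letting $\delta\to0$ at Lebesgue points $t_0$ yields
\begin{equation*}
\int_{\dom} w(t_0)\cdot\phi(t_0)\,dx=\int_{\dom} w_0\cdot\phi(0)\,dx.
\end{equation*}
Taking $t_0=T$ gives $\langle w(T),\psi\rangle=\langle w_0,\phi(0)\rangle\le\norm{w_0}_{L^2}\norm{\psi}_{L^2}$. Taking the supremum over $\psi$ in the dense class of divergence-free trigonometric polynomials (recall that $w(T)$ is itself divergence free) gives $\norm{w(T)}_{L^2}\le\norm{w_0}_{L^2}$.

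\textbf{Main obstacle.} The delicate point is justifying Step 2, namely the time cutoff and the use of $\phi$ as a test function when $w$ has only $L^\infty L^2$ regularity and weak time continuity. The hypothesis $U\in L^1W^{1,\infty}$ enters exactly here, through \Cref{lem:h1bound} applied to the adjoint problem. I would handle this by first establishing that $w$ admits a weakly continuous representative in $L^2$, using the bound on $\partial_t w$ from \Cref{lem:existence}. Then every product term $w\cdot(U\cdot\Grad)\phi$ is integrable, because
\begin{equation*}
U\in L^\infty_{t,x}\ \text{(from } L^1W^{1,\infty}\text{ and } L^\infty W^{1,q}\text{, at least a.e. in time)}\quad\text{and}\quad \Grad\phi\in L^\infty L^2.
\end{equation*}
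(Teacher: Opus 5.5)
Your argument is correct, but it takes a genuinely different route from the paper's.

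\textbf{The paper's route.} The paper uses a direct energy method. For $u_0,v_0\in H^1_{\Div}(\dom)$ it takes the solutions to lie in $L^\infty(\R^+;H^1_{\Div})$ by Lemma~\ref{lem:h1bound}. It then tests the difference of the weak formulations with $(u-v)\theta_\epsilon$, where $\theta_\epsilon=\mathbf{1}_{(r,t)}\star\omega_\epsilon$, so the transport term drops out because $\Div U=0$. This gives the energy \emph{equality} $\norm{u(t)-v(t)}_{L^2}=\norm{u_0-v_0}_{L^2}$. General $L^2$ data are then handled by mollifying the data and passing to the limit.

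\textbf{Your route.} You never test with the unknown difference $w$. Instead you place all the regularity on a backward adjoint problem with smooth final datum $\psi$. You then conclude by duality from $\langle w(T),\psi\rangle=\langle w_0,\phi(0)\rangle$ and $\norm{\phi(0)}_{L^2}\le\norm{\psi}_{L^2}$.

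\textbf{What each buys.} The paper's argument is shorter, needs no backward problem, and gives conservation of $\norm{u-v}_{L^2}$ for $H^1$ data. Yours is more robust. The paper's test function $(u-v)\theta_\epsilon$ requires $u-v\in L^\infty H^1$, which Lemma~\ref{lem:h1bound} supplies only for solutions obtained as limits of the spectral scheme. The density step likewise only reaches such limits. So, as written, the paper proves stability and uniqueness within that class. Your argument only needs $w\in L^\infty(\R^+;L^2_{\Div})$ satisfying the weak formulation, so it actually yields uniqueness among all weak solutions, which is what the lemma asserts.

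\textbf{Minor points.} The hypotheses give $U\in L^1_tL^\infty_x$, not $L^\infty_{t,x}$. This still suffices for $w\cdot(U\cdot\Grad)\phi\in L^1_{t,x}$ and for $\partial_t\phi\in L^1(0,T;L^2)$. Also, $\phi\in C([0,T];L^2)$ and $w$ is weakly continuous in $L^2$, so $t\mapsto\langle w(t),\phi(t)\rangle$ is continuous. You may therefore take $t_0=T$ directly rather than restricting to Lebesgue points.
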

\begin{proof}
	Assume first that $u_0,v_0\in H^1_{\Div}(\dom)$, and hence the corresponding weak solutions also satisfy $u,v\in L^\infty(\R^+;H^1_{\Div}(\dom))$.  Subtracting the weak formulations for the two solutions from each other, we have
	\begin{equation*}
			\int_0^\infty\int_{\dom} (u-v)\cdot \partial_\tau\varphi  dx d\tau +\int_{\dom} (u_0-v_0)\cdot \varphi(0,x) dx =-\int_0^\infty\int_{\dom} (U\cdot \Grad) \varphi\cdot (u-v) dx d\tau.
		\end{equation*}
		For a given $t>0$, we let $\theta_{\epsilon}(\tau)=\mathbf{1}_{(r,t)}\star \omega_{\epsilon}(\tau)$, where $\omega_\epsilon(s)=\frac{1}{\epsilon}\omega(s/\epsilon)$ and $\omega$ is a mollifier (nonnegative, symmetric and compactly supported with $\int_\R\omega (s)ds =1$) and $r>\epsilon>0$ is a small number. 
		Then we take $\varphi=(u-v)\theta_\epsilon$ as a test function, which is admissible since it is divergence free and in $L^\infty(0,\infty;H^1(\dom))\cap W^{1,\infty}(0,\infty;L^2(\dom))$ and compactly supported in time. Then we get
			\begin{multline*}
			\int_0^\infty\int_{\dom} \left(|u-v|^2\cdot \partial_\tau\theta_{\epsilon} +\frac12\partial_t |u-v|^2 \theta_\epsilon \right)dx d\tau +\int_{\dom} |u_0-v_0|^2\theta_\epsilon(0) dx\\
			 =-\int_0^\infty\int_{\dom} (U\cdot \Grad) (u-v)\cdot (u-v)\theta_\epsilon(t) dx d\tau.
		\end{multline*}
		Note that since $r>\epsilon$, we have $\theta_\epsilon(0)=0$.
		Integrating by parts and using that $U$ is divergence free and $\theta_\epsilon$ is compactly supported, we obtain
		\begin{equation}\label{eq:whatever}
			\frac12\int_0^\infty\int_{\dom} |u-v|^2\cdot \partial_\tau\theta_{\epsilon} dx d\tau
			=0.
		\end{equation}

We have
		\begin{equation*}
			\partial_\tau \theta_\epsilon(\tau) = \int_{\R} \mathbf{1}_{(r,t)}(s)\partial_\tau\omega_\epsilon(\tau - s) ds =-\int_r^t \partial_s\omega_\epsilon(\tau - s) ds = \omega_\epsilon(\tau-r)-\omega_\epsilon(\tau - t).
		\end{equation*}
		Using this,~\eqref{eq:whatever} becomes
			\begin{equation*}
			\frac12\int_0^\infty\int_{\dom} |u-v|^2 (\omega_\epsilon(\tau - t)-\omega_\epsilon(r-\tau))dx d\tau = 0.
		\end{equation*}
		Letting $\epsilon\to 0$ and using the time continuity of $u$ and $v$, we obtain
		\begin{equation*}
			\frac12\int_{\dom} |u(t)-v(t)|^2 dx = \frac12\int_{\dom} |u(r)-v(r)|^2 dx.
		\end{equation*}
		Since we could choose $r$ arbitrarily small, we can use time continuity of $u$ and $v$ to conclude that
		\begin{equation}\label{eq:gaga}
			\int_{\dom} |u(t)-v(t)|^2 dx  = \int_{\dom} |u_0-v_0|^2 dx.
	\end{equation}
    
    For general $u_0,v_0\in L^2_{\Div}(\dom)$, we approximate them by smooth functions (e.g. by mollifying them), thus producing sufficiently regular $u$ and $v$ by the previous Lemma~\ref{lem:h1bound}. Then we note that \eqref{eq:gaga} only depends on the $L^2$-norms of $u(t)$, $v(t)$, $u_0$, $v_0$, thus we can pass to the limit in the approximation and obtain the result for the limiting $u_0,v_0\in L^2_{\Div}(\dom)$. The uniqueness follows by setting $u_0=v_0$.
\end{proof}
Next, we prove a convergence rate of the spectral scheme~\eqref{eq:numscheme}.
\begin{lemma}[Convergence rate]
	\label{lem:convrate}
	If $U\in L^\infty(\R^+;W^{1,q}(\dom))\cap L^1(\R^+;W^{1,\infty}(\dom))$ for some $q\in (1,\infty]$ and $u_0\in H^1_{\Div}(\dom)$, then the approximations $u_N$ computed by the spectral scheme~\eqref{eq:numscheme} satisfy
	\begin{equation}
		\label{eq:convrate}
		\norm{u-u_N}_{L^2}(t)\leq C\norm{U-P_NU}_{L^1_t L^\infty_x}+ CN^{-1}\norm{u_0 }_{ H^1_x}\exp\left(\norm{U}_{L^1_t W^{1,\infty}_x}\right),
	\end{equation}
	where $\norm{U-P_NU}_{L^1_t L^\infty_x}$ goes to zero due to  the approximation properties of the Fourier truncation $P_N$.
	
\end{lemma}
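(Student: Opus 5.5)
We split the error as $u-u_N = (u-\p_N u) + (\p_N u - u_N)$, where $\p_N u$ is the projection of the exact solution onto divergence-free Fourier modes with $|k|_\infty\le N$. Since $u$ is divergence free, $\p_N u = P_N u$.

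\emph{The first term.} By Lemma~\ref{lem:h1bound}, $\norm{\Grad u(t)}_{L^2}\le \norm{\Grad u_0}_{L^2}\exp(\norm{U}_{L^1_tW^{1,\infty}_x})$. The standard Fourier tail estimate $\norm{(I-P_N)f}_{L^2}\le C N^{-1}\norm{\Grad f}_{L^2}$ (with $C$ depending on $L$) then gives
\begin{equation*}
\norm{u-P_Nu}_{L^2}(t)\le CN^{-1}\norm{u_0}_{H^1}\exp\left(\norm{U}_{L^1_tW^{1,\infty}_x}\right).
\end{equation*}
This is exactly the second term in \eqref{eq:convrate}.

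\emph{The second term, $e_N:=P_Nu-u_N$.} Since $u\in L^\infty H^1$ and $\partial_t u\in L^1L^2$ by Lemma~\ref{lem:h1bound}, the equation holds in $L^2$ after applying the Leray projection. Applying $\p_N$ to the equation for $u$ gives $\partial_t P_N u + \p_N((U\cdot\Grad)u)=0$. Subtracting \eqref{eq:numscheme} yields
\begin{equation*}
\partial_t e_N + \p_N\big((U_N\cdot\Grad)e_N\big) = -\p_N\big((U\cdot\Grad)u - (U_N\cdot\Grad)P_Nu\big),
\end{equation*}
with $e_N(0)=0$. We test this with $e_N$. The transport term vanishes as in Lemma~\ref{lem:existence}, because $\p_N e_N=e_N$ and $\Div U_N=0$. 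We split the right-hand side as
\begin{equation*}
((U-U_N)\cdot\Grad)u + (U_N\cdot\Grad)(u-P_Nu).
\end{equation*}
The first piece contributes at most $\norm{U-U_N}_{L^\infty}\norm{\Grad u}_{L^2}\norm{e_N}_{L^2}$. For the second piece we integrate by parts, moving the derivative onto $e_N$, and use $\Div U_N=0$ to write it as $-\int (u-P_Nu)\cdot (U_N\cdot\Grad)e_N$. We then exploit orthogonality: $u-P_Nu$ has only modes with $|k|_\infty>N$. The product $(U_N\cdot\Grad)e_N$ has modes with $|k|_\infty\le 2N$, so there is overlap and the term does not vanish. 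In that case we instead bound it directly, before integrating by parts, by $\norm{U_N}_{L^\infty}\norm{\Grad(u-P_Nu)}_{L^2}\norm{e_N}_{L^2}$. This only gives $O(1)\cdot\norm{\Grad u}$, which does not decay, so this route fails.

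The better route is to leave the commutator in the form $\p_N\big((U_N\cdot\Grad)u\big)-\p_N\big((U_N\cdot\Grad)P_Nu\big)$ and estimate it via a commutator bound. Writing $(U_N\cdot\Grad)(I-P_N)u$ and pairing with $e_N=P_Ne_N$, we get
\begin{equation*}
\int (U_N\cdot \Grad)(I-P_N)u\cdot e_N = -\int (I-P_N)u\cdot (U_N\cdot\Grad)e_N = -\int (I-P_N)u\cdot [U_N\cdot\Grad, P_N]e_N ,
\end{equation*}
since $(I-P_N)u$ is orthogonal to $P_N\big((U_N\cdot\Grad)e_N\big)$. The remaining commutator $[P_N, U_N\cdot\Grad]e_N$ should be controlled by $C\norm{\Grad U_N}_{L^\infty}\norm{e_N}_{L^2}$, which is a Calder\'on-type commutator estimate for smooth Fourier cutoffs. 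For the sharp cube cutoff this estimate is delicate, and I expect it to be the main obstacle. If it fails, I would fall back on a smooth cutoff, or absorb the loss into the constant.

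With these bounds we obtain
\begin{equation*}
\frac{d}{dt}\norm{e_N}_{L^2}\le \norm{U-U_N}_{L^\infty}\norm{\Grad u}_{L^2} + C\norm{\Grad U}_{L^\infty}\norm{(I-P_N)u}_{L^2}.
\end{equation*}
Integrating in time from $e_N(0)=0$, inserting the $H^1$ bound from Lemma~\ref{lem:h1bound}, and using the tail estimate $\norm{(I-P_N)u}_{L^2}\le CN^{-1}\norm{\Grad u}_{L^2}$ gives $C\norm{U-P_NU}_{L^1_tL^\infty_x}$ plus a term of size $N^{-1}\norm{u_0}_{H^1}\exp(\cdot)$. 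The constant $C$ absorbs $\norm{\Grad u}_{L^\infty_tL^2}$ in the first term. Combining this with the bound on $u-P_Nu$ yields \eqref{eq:convrate}.
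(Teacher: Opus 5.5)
\textbf{The commutator step fails.} Your decomposition, and the error equation for $e_N=P_Nu-u_N$, coincide with the paper's. You also correctly isolate the crux: the term $\int_{\dom}((U_N\cdot\Grad)(I-P_N)u)\cdot e_N\,dx$. The step that fails is the commutator bound $\norm{[P_N,U_N\cdot\Grad]e_N}_{L^2}\le C\norm{\Grad U_N}_{L^\infty}\norm{e_N}_{L^2}$. For the sharp cube cutoff it is not merely delicate; it is false.

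Since $P_Ne_N=e_N$, the commutator is just the leakage $-(I-P_N)\bigl((U_N\cdot\Grad)e_N\bigr)$. This leakage has size of order $N\norm{e_N}_{L^2}$ when $e_N$ sits at the edge of the cube. Concretely, work on $\T^2$ with $L=2\pi$ and take $U=(\cos x_1\sin x_2,-\sin x_1\cos x_2)$, so that $U_N=U$ and $\norm{\Grad U}_{L^\infty}\le\sqrt2$. Take also the divergence-free field $e=(0,1)e^{iNx_1}$, which satisfies $P_Ne=e$. Then $(U\cdot\Grad)e=iN\cos x_1\sin x_2\,e^{iNx_1}(0,1)$, whose modes $(N+1,\pm1)$ lie outside the cube. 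Hence $\norm{(I-P_N)((U\cdot\Grad)e)}_{L^2}=\frac{N}{2\sqrt2}\norm{e}_{L^2}$.

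Now pair this with a divergence-free $(I-P_N)u$ concentrated on those modes, for which $\norm{(I-P_N)u}_{L^2}\approx N^{-1}\norm{\Grad(I-P_N)u}_{L^2}$. This shows that no estimate of the form $C\norm{\Grad U}_{L^\infty}\norm{(I-P_N)u}_{L^2}\norm{e_N}_{L^2}$ can hold. The term can be of order $\norm{\Grad u}_{L^2}\norm{e_N}_{L^2}$ with no decay in $N$, so your differential inequality for $\norm{e_N}_{L^2}$ cannot be obtained this way. Neither fallback rescues the statement. A smooth cutoff changes the scheme the lemma is about. The loss is a factor $N$, not a constant, so it cannot be absorbed.

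\textbf{Comparison with the paper.} The paper's proof takes exactly your route. At this same step it bounds $-\int_{\dom}((U_N\cdot\Grad)(u-\p_Nu))\cdot(u_N-\p_Nu)\,dx$ by $\norm{U_N}_{L^\infty}\norm{u-\p_Nu}_{L^2}\norm{u_N-\p_Nu}_{L^2}$, which silently drops the derivative. So it does not supply the missing idea either. Your observation that the honest direct bound involves $\norm{\Grad(I-P_N)u}_{L^2}$, which does not decay at a rate, is correct.

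\textbf{What can be proved.} With only $u_0\in H^1$, these tools give a weaker rate. Write $\langle f,g\rangle=\int_{\dom}f\cdot g\,dx$, and use $\norm{u(t)}_{L^2}=\norm{u_0}_{L^2}$ and $\norm{u_N(t)}_{L^2}=\norm{P_Nu_0}_{L^2}$. Then
$\norm{u-u_N}_{L^2}^2(t)=\norm{(I-P_N)u_0}_{L^2}^2-2\int_0^t\frac{d}{ds}\langle u,u_N\rangle\,ds$, with
$\frac{d}{ds}\langle u,u_N\rangle=\langle u,((U-U_N)\cdot\Grad)u_N\rangle+\langle (I-P_N)u,(U_N\cdot\Grad)u_N\rangle$.
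Here the derivative falls on $u_N$, which is bounded in $H^1$ uniformly in $N$. This yields $\norm{u-u_N}_{L^2}\le C\bigl(N^{-1/2}+\norm{U-P_NU}_{L^1_tL^\infty_x}^{1/2}\bigr)$.

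An $N^{-1}$ rate, with $\norm{u_0}_{H^2}$ in place of $\norm{u_0}_{H^1}$, does follow from your energy argument if $u\in L^\infty_tH^2_x$, for example under the hypotheses of Lemma~\ref{lem:h2estimate}. In that case $\norm{U_N}_{L^\infty}\norm{\Grad(I-P_N)u}_{L^2}\le CN^{-1}\norm{U_N}_{L^\infty}\norm{\Grad^2u}_{L^2}$.
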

\begin{proof}
We consider $u_N-\p_N u$, where $u\in L^\infty(0,T;H^1(\dom))$ is a weak solution of~\eqref{eq:vectortransport} and $u_N$ the approximation with the spectral scheme~\eqref{eq:numscheme} for the corresponding $U$ and initial data $u_0$. Due to the truncation, $\p_N u$ is smooth in space, and therefore satisfies the PDE pointwise:
\begin{equation*}
	\partial_t \p_N u = \p_N ((U\cdot \Grad)u).
\end{equation*}
Thus, the difference $u_N-\p_N u$ satisfies
\begin{equation*}
	\partial_t (u_N - \p_N u) = \p_N\left((U_N\cdot\Grad)u_N - (U\cdot \Grad ) u\right).
\end{equation*}
We multiply this equation by $u_N-\p_N u$, integrate, use chain rule and that $\p_N(u_N-\p_N u)=u_N-\p_N u$:
	\begin{align*}
	&\frac12\frac{d}{dt}\norm{\p_Nu-u_N}_{L^2}^2   = \int_{\dom} 	(u_N-\p_N u)\cdot \partial_t(u_N-\p_N u)dx  \\
	&\quad = \int_{\dom}\p_N\left((U_N\cdot \Grad)u_N -(U\cdot \Grad ) u\right)\cdot (u_N-\p_N u) dx\\
		&\quad = \int_{\dom}\left((U_N\cdot \Grad)u_N -(U\cdot \Grad ) u\right)\cdot (u_N-\p_N u) dx\\
	& \quad = \int_{\dom}(((U_N-U)\cdot \Grad)u)\cdot (u_N-\p_N u) dx  - \int_{\dom}((U_N\cdot \Grad)(u-u_N) )\cdot (u_N-\p_N u) dx \\
	& \quad \leq \norm{U-U_N}_{L^\infty}\norm{\Grad u}_{L^2} \norm{u_N-\p_N u}_{L^2} - \int_{\dom} ((U_N\cdot\Grad)(u-\p_N u))\cdot (u_N-\p_N u) dx \\
	&\qquad - \int_{\dom} ((U_N\cdot\Grad )(\p_N u - u_N))\cdot (u_N-\p_N u)dx \\
	& \quad \leq\norm{U-U_N}_{L^\infty}\norm{\Grad u}_{L^2} \norm{u_N-\p_N u}_{L^2} +\norm{U_N}_{L^\infty}\norm{u-\p_N u}_{L^2}\norm{u_N-\p_N u}_{L^2}   .
\end{align*}
Hence
\begin{equation*}
	\frac{d}{dt}\norm{\p_N u - u_N}_{L^2}\leq \norm{\Grad u}_{L^2}\norm{U-U_N}_{L^\infty}+ \norm{U_N}_{L^\infty}\norm{u-\p_Nu}_{L^2}.
\end{equation*}
Since $\p_N u(0)-u_N(0)=0$, and both $U$ and $u$ are divergence free, we obtain
\begin{align*}
	\norm{\p_Nu-u_N}_{L^2}&\leq C\left(\norm{U-\p_NU}_{L^1(0,t;L^\infty(\dom))}+ \norm{u-\p_Nu}_{L^\infty_tL^2_x}\right)\\
	&= C\left(\norm{U-P_NU}_{L^1(0,t;L^\infty(\dom))}+ \norm{u-P_Nu}_{L^\infty_tL^2_x}\right)
\end{align*}
and in particular,
\begin{align*}
	\norm{u-u_N}_{L^2}&\leq C\left(\norm{U-P_NU}_{L^1(0,t;L^\infty(\dom))}+\norm{u-P_Nu}_{L^\infty_tL^2_x}\right)\\
    & \leq C\left(\norm{U-P_NU}_{L^1(0,t;L^\infty(\dom))}+N^{-1}\norm{u }_{L^\infty_tH^1_x}\right)\\
    & \leq C\left(\norm{U-P_NU}_{L^1(0,t;L^\infty(\dom))}+N^{-1}\norm{u_0 }_{ H^1_x}\exp\left(\norm{U}_{L^1_t W^{1,\infty}_x}\right)\right),
\end{align*}
using the approximation properties of the Fourier projection $P_N$ and Lemma~\ref{lem:h1bound}.
The right hand side goes to zero at a rate depending on the smoothness of $U$ and $u$ as $n\to \infty$.

\end{proof}
\section{Mixing estimates}
\label{sec: Mixing estimates}

The goal of this section is to derive estimates on the minimal mixing time of weak solutions of~\eqref{eq:vectortransport} in the $H^{-\alpha}(\dom)$ norm for $\alpha\in [\frac12,1]$. All the estimates can be derived at the level of the approximations $u_N$ by the spectral scheme~\eqref{eq:numscheme}. As we shall see, these estimates are uniform in $N$ and thus we can take the limit $N\to\infty$.
For any $\alpha>0$, following~\cite{Lin2011}, let us define 
\begin{equation*}
	(-\Delta )^\alpha \varphi_N := \left(\frac{2\pi}{L}\right)^{2\alpha}\sum_{|k|_\infty\leq N} |k|^{2\alpha} \hat{\varphi}_k(t)e^{i2\pi \frac{k\cdot x}{L}} := u_N,
\end{equation*}
and
\begin{equation*}
	|\Grad|^\alpha \varphi_N : =  \left(\frac{2\pi}{L}\right)^{\alpha}\sum_{|k|_\infty\leq N}  |k|^{\alpha} \hat{\varphi}_k(t)e^{i2\pi \frac{k\cdot x}{L}}
\end{equation*}
Therefore, $\hat{\varphi}_k(t) = \frac{1}{|k|^{2\alpha}} \left(\frac{L}{2\pi}\right)^{\alpha} \hat{u}_{k}(t)$ and 
\begin{equation*}
	\norm{u_N}_{H^{-\alpha}(\dom)} = \norm{|\Grad|^\alpha \varphi_N}_{L^2(\dom)}.
\end{equation*}
Next, we make the following computation for the time evolution of $\norm{u_N}_{H^{-\alpha}}$ using the PDE \eqref{eq:numscheme}:
\begin{align*}
	\frac{1}{2}\frac{d}{dt}\norm{u_N}_{H^{-\alpha}}^2 & = \frac12\frac{d}{dt}\norm{|\Grad|^\alpha \varphi_N}_{L^2}^2, \\
	& = \int_{\dom} \varphi_N\cdot\partial_t u_N dx, \\
	& = -\int_{\dom} \p_N((U_N\cdot\Grad)u_N)\cdot \varphi_N dx, \\
	& = -\int_{\dom} ((U_N\cdot\Grad)u_N)\cdot \varphi_N dx, \\
	& = -\int_{\dom} ((U_N\cdot\Grad)(-\Delta)^\alpha \varphi_N)\cdot \varphi_N dx,\\
	& = \int_{\dom} ((|\Grad|^\alpha U_N\cdot\Grad)|\Grad |^\alpha \varphi_N)\cdot \varphi_N dx,\\
	& = -\int_{\dom} ((|\Grad|^\alpha U_N\cdot\Grad)\varphi_N): |\Grad |^\alpha\varphi_N dx.
\end{align*}
We therefore obtain the following lower bound:
\begin{equation}\label{eq:tia}
	\begin{split}
		\frac12\frac{d}{dt}\norm{u_N}_{H^{-\alpha}}^2 \geq - \norm{|\Grad|^\alpha U_N}_{L^r}\norm{|\Grad|^\alpha \varphi_N}_{L^{\frac{2r}{r-2}}}\norm{\Grad \varphi_N}_{L^2},
	\end{split}
\end{equation}
where $r\geq 1$ is to be determined such that $\norm{|\Grad|^\alpha U_N}_{L^r}$ is uniformly bounded with respect to $N$. We will distinguish the three cases $q(1-\alpha)<d$, $q(1-\alpha)=d$ and $q(1-\alpha)>d$, since they will require either a fractional version of the Gagliardo-Nirenberg-Sobolev inequality or Morrey's inequality. We will use $C$ to denote a constant that can vary from line to line and may depend on $\alpha,d$ and $q$ but not on $N$ or time $t$. 
\subsection{Case $q(1-\alpha)<d$ }
This includes the case $\alpha=1$ if $q\neq \infty$.
\begin{lemma}\label{lem:case1}
	Assume that either $q(1-\alpha)<d$, or else $\alpha=1$ and $q\neq\infty$. Then, we have for any $t\geq 0$,
	\begin{equation*}
		\norm{u_N(t)}_{H^{-\alpha}}\geq \left[\norm{u_N(0)}_{H^{-\alpha}}^{\frac{d}{\alpha q}} - t \frac{C}{\alpha} \norm{\Grad U_N}_{L^\infty_tL^q_x}\norm{u_N(0)}_{L^2}^{\frac{d}{\alpha q}}\right]^{\frac{\alpha q}{d}}.
	\end{equation*}
	In particular, we have for any limit $u$ of the sequence $u_N$, 
		\begin{equation*}
		\norm{u(t)}_{H^{-\alpha}}\geq \left[\norm{u_0}_{H^{-\alpha}}^{\frac{d}{\alpha q}} - t \frac{C}{\alpha} \norm{\Grad U}_{L^\infty_tL^q_x}\norm{u_0}_{L^2}^{\frac{d}{\alpha q}}\right]^{\frac{\alpha q}{d}}.
	\end{equation*}
	Thus the minimal mixing time is given by
	\begin{equation*}
		t_{\min} =   \frac{ \alpha\norm{u_0}_{H^{-\alpha}}^{\frac{d}{\alpha q}}}{ C\norm{u_0}_{L^2}^{\frac{d}{\alpha q}} \norm{\Grad U}_{L^\infty_t L^{q}_x}  }.
	\end{equation*}
	\end{lemma}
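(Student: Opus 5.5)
We start from the differential inequality \eqref{eq:tia} and bound each factor by quantities that are conserved or controlled. The exponent $r$ is chosen through the fractional Sobolev embedding $W^{1,q}\hookrightarrow W^{\alpha,r}$, i.e. $\frac1r=\frac1q-\frac{1-\alpha}{d}$. This is admissible precisely because $q(1-\alpha)<d$; for $\alpha=1$ we simply take $r=q$. It gives $\norm{|\Grad|^\alpha U_N}_{L^r}\le C\norm{\Grad U_N}_{L^q}\le C\norm{\Grad U}_{L^q}$, uniformly in $N$, since $P_N$ is bounded on $L^q$ for $1<q<\infty$. For $q=1$ one needs a bit of extra care, or one argues directly at the level of Besov spaces.

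Next we control $\norm{|\Grad|^\alpha\varphi_N}_{L^{2r/(r-2)}}$. Set $p=\frac{2r}{r-2}$. Gagliardo--Nirenberg in fractional form gives
\[
\norm{|\Grad|^\alpha\varphi_N}_{L^p}\le C\norm{|\Grad|^{\alpha}\varphi_N}_{L^2}^{1-\theta}\norm{|\Grad|^{2\alpha}\varphi_N}_{L^2}^{\theta},
\]
where $\theta\alpha=d(\frac12-\frac1p)=\frac dr$. Here $|\Grad|^{2\alpha}\varphi_N=u_N$ up to constants, so $\norm{|\Grad|^{2\alpha}\varphi_N}_{L^2}\le\norm{u_N(0)}_{L^2}$ by the $L^2$ conservation of Lemma~\ref{lem:existence}, while $\norm{|\Grad|^\alpha\varphi_N}_{L^2}=\norm{u_N}_{H^{-\alpha}}$. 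Similarly $\norm{\Grad\varphi_N}_{L^2}$ has $1-2\alpha\le0$ derivatives relative to $u_N$. It interpolates as $\norm{u_N}_{H^{-\alpha}}^{2-1/\alpha}\norm{u_N}_{L^2}^{1/\alpha-1}$, which is valid since $\alpha\in[\frac12,1]$ and the exponent of derivative, $1-2\alpha$, lies between $-\alpha$ and $0$.

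Multiplying these bounds and tracking exponents should give
\[
\frac{d}{dt}\norm{u_N}_{H^{-\alpha}}\ge -C\norm{\Grad U}_{L^q}\norm{u_N}_{H^{-\alpha}}^{1-\frac{d}{\alpha q}}\norm{u_N(0)}_{L^2}^{\frac{d}{\alpha q}}.
\]
The exponent check uses $\frac dr=\frac dq-(1-\alpha)$, so $\theta+\frac1\alpha-1=\frac{d}{\alpha q}$, and the total homogeneity is one after dividing by $\norm{u_N}_{H^{-\alpha}}$. Writing $Y=\norm{u_N}_{H^{-\alpha}}^{d/(\alpha q)}$, we get $\dot Y\ge-\frac{d}{\alpha q}C\norm{\Grad U}_{L^q}\norm{u_N(0)}_{L^2}^{d/(\alpha q)}$. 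Integrating in time yields the stated bound, with constants absorbed into $C$. We may divide by the norm as long as it is positive; once it vanishes the bound is trivial.

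For the passage $N\to\infty$ we use the uniform-in-$N$ estimates together with weak convergence. Since $u_N\rightharpoonup u$ and $u_N(0)\to u_0$, the $H^{-\alpha}$ norm is compact relative to $L^2$ on the torus, so $u_N(t)\to u(t)$ strongly in $H^{-\alpha}$. This follows from the time-derivative bound and Aubin--Lions. Hence the inequality passes to the limit, and the mixing time follows by setting the bracket to zero.

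The main obstacle is the exponent bookkeeping. We must ensure $p\ge2$ and $\theta\in[0,1]$, i.e. $d/r\le\alpha$. When $d/r>\alpha$ we would instead use $\norm{u_N}_{L^2}$ bounds differently, or restrict to the range where the interpolation is valid. The endpoint cases $q=1$ and $\alpha=1/2$ must also be checked.
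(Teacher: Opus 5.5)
Your proposal is the paper's proof step for step. It uses the same exponent $r=\frac{dq}{d-(1-\alpha)q}$ for $|\Grad|^\alpha U_N$ and the same Gagliardo--Nirenberg bound for $|\Grad|^\alpha\varphi_N$ in $L^{2r/(r-2)}$ (your $1-\theta=1-\frac{d}{\alpha r}$ is the paper's $\theta=\frac{q-d}{\alpha q}$). It also uses the same interpolation of $\norm{\Grad\varphi_N}_{L^2}$, the same substitution $Y=\norm{u_N}_{H^{-\alpha}}^{d/(\alpha q)}$, and the same Aubin--Lions limit. Your appeal to the $L^q$-boundedness of the cube truncation $P_N$ ($1<q<\infty$) is a cleaner route to uniformity in $N$ than the paper's claim that $\Grad U_N\to\Grad U$ in $L^\infty_tL^q_x$.

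The obstacle in your last paragraph is real, and ``use the $L^2$ bounds differently'' does not remove it.
\begin{itemize}
\item The condition $d/r\le\alpha$ is equivalent to $q\ge d$. For $q<d$ your Gagliardo--Nirenberg step would need more than $2\alpha$ derivatives of $\varphi_N$ in $L^2$, i.e.\ positive regularity of $u_N$, so it is false.
\item The paper's proof has exactly this unstated restriction, since its exponent $\frac{q-d}{\alpha q}$ is negative there. You are not missing an idea that the paper supplies.
\item $\theta\in[0,1]$ is not sufficient either. For $q=d=2$, $\alpha=1$ (the setting of the numerics) you get $r=2$, $p=\infty$, and $\norm{|\Grad|\varphi_N}_{L^\infty}\lesssim\norm{\Delta\varphi_N}_{L^2}$ fails uniformly in $N$ in two dimensions.
\end{itemize}

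Both defects disappear if you do not put $\Grad\varphi_N$ in $L^2$. Split the trilinear term as $L^r\cdot L^a\cdot L^b$ with $a,b\ge 2$, and interpolate each $\varphi_N$-factor between $\dot H^{\alpha}$ and $\dot H^{2\alpha}$. For every such split the total power of $\norm{u_N}_{H^{-\alpha}}$ is $\frac{3\alpha-1-d/r}{\alpha}=2-\frac{d}{\alpha q}$, so the conclusion of the lemma is unchanged. A non-endpoint split exists as soon as $\frac1q-\frac{1-\alpha}{d}+\bigl(\frac12-\frac{2\alpha-1}{d}\bigr)+\bigl(\frac12-\frac{\alpha}{d}\bigr)<1$, i.e.\ $q>\frac{d}{2\alpha}$.

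For example, with $\alpha=1$ and $q=2$, bound the term by $\norm{\Grad U_N}_{L^2}\norm{\Grad\varphi_N}_{L^4}^2$. Then use Ladyzhenskaya's inequality in $d=2$, or $\norm{f}_{L^4}\lesssim\norm{f}_{L^2}^{1/4}\norm{\Grad f}_{L^2}^{3/4}$ in $d=3$.

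For $q<\frac{d}{2\alpha}$ (e.g.\ $d=3$, $q=2$, $\alpha=\frac12$, still allowed by $q(1-\alpha)<d$) the problem is different. The sharp integrability exponents already have reciprocals summing to more than $1$, so no such split exists. Neither your argument nor the paper's establishes the stated bound in that range.
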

\begin{proof}
We start with~\eqref{eq:tia}.
To find $r$, we use the fractional Sobolev inequality~\cite[Thm. 6.5]{DiNezza2012}, or Theorem~\ref{thm:fractionalsobolev} in the Appendix, with $f= (|\Grad|^\alpha U_N)_i$, $i=1,\dots, d$,
\begin{equation}\label{eq:Ur}
	\norm{|\Grad|^\alpha U_N}_{L^r}\leq C  \||\Grad|^\alpha U_N\|_{W^{1-\alpha,q}} = C  \norm{\Grad U_N}_{L^q},
\end{equation}
for $r = \frac{dq}{d-(1-\alpha)q}$, where we need $q(1-\alpha)<d$, and this estimate is independent of $N\in\N$ due to the assumption~\eqref{eq:Ubound}.
We note that if $\alpha<\frac12$, the gradient of $\varphi$ is  a positive order Sobolev norm of $u$ for which we may not have uniform bounds unless $U$ also has higher regularity. As we assume that $\alpha\geq\frac12$, we can use  the fractional Sobolev interpolation inequality~\cite{Brezis2018}, stated in the Appendix, Theorem~\ref{thm:fractionalinterpolation} for $\norm{\Grad\varphi_N}_{L^2}$, using $s_1=\alpha$, $s_2=2\alpha$, $p=p_1=p_2=2$ and $\theta = \frac{2\alpha-1}{\alpha}$, we get
\begin{equation}\label{eq:fractionalL2}
	\norm{\Grad\varphi_N}_{L^2}\leq C\norm{\varphi_N}_{W^{\alpha,2}}^{\frac{2\alpha-1}{\alpha}} \norm{\varphi_N}_{W^{2\alpha,2}}^{\frac{1-\alpha}{\alpha}} = C \norm{u_N}_{H^{-\alpha}}^{\frac{2\alpha-1}{\alpha}}\norm{u_N(0)}_{L^2}^{\frac{1-\alpha}{\alpha}}.
\end{equation}
For the term $\norm{|\Grad|^\alpha \varphi_N}_{L^{\frac{2r}{r-2}}}$ in~\eqref{eq:tia}, we  use the fractional Gagliardo-Nirenberg-Sobolev inequality~\cite[Thm 1]{Brezis2019}, restated in the appendix in Theorem~\ref{thm:fractionalGNS}. Taking in that theorem $\tilde{r}=0$, $s_1=0$, $s_2=\alpha$, $p_1=p_2=2$, $\tilde{q}=\frac{2r}{r-2}$, none of the exception cases state there apply, and thus we have 

\begin{equation}\label{eq:otherterm}
	\norm{|\Grad|^\alpha \varphi_N}_{L^{\tilde{q}}}\leq C \norm{|\Grad|^\alpha\varphi_N}_{L^2}^\theta \norm{(-\Delta)^\alpha\varphi_N}_{L^2}^{1-\theta},
\end{equation} 
where $\theta$ is given by 
\begin{equation*}
	\theta = 1 +\frac{d}{\alpha}\left(\frac1{\tilde{q}}-\frac12\right) = \frac{2\tilde{q}\alpha + 2d-\tilde{q}d}{2\tilde{q}\alpha}.
\end{equation*}

The value of $\theta$ was obtained using the following:
\begin{equation*}
	\frac{1}{\tilde{q}} = \frac12-\frac{s}{d},\quad s = (1-\theta)\alpha.
\end{equation*}
Substituting  $\tilde{q}=\frac{2r}{r-2}$, we get the beautiful expression:
\begin{equation}\label{eq:howtogettheta}
	\theta = 1 +\frac{d}{\alpha}\left(\frac{r-2}{2r}-\frac12\right) =1-\frac{d}{\alpha r}= \frac{\alpha r-d}{\alpha r}.
\end{equation}
Next, we use the expression we derived for $r$ above:
\begin{equation*}
	\theta = \frac{1}{\alpha }-\frac{d}{\alpha q}= \frac{q-d}{\alpha q}.
\end{equation*}
We plug~\eqref{eq:Ur}, \eqref{eq:fractionalL2}, and~\eqref{eq:otherterm} into~\eqref{eq:tia} to obtain
\begin{equation*}
	\begin{split}
		\frac12\frac{d}{dt}\norm{u_N}_{H^{-\alpha}}^2& \geq -C \norm{\Grad U_N}_{L^q} \norm{u_N}_{H^{-\alpha}}^{\frac{2\alpha-1}{\alpha}+\frac{q-d}{\alpha q}}\norm{u_N(0)}_{L^2}^{\frac{1-\alpha}{\alpha}+ \frac{\alpha q-q+d}{\alpha q}}\\
		& \geq -C \norm{\Grad U_N}_{L^q} \norm{u_N}_{H^{-\alpha}}^{2-\frac{d}{\alpha q}}\norm{u_N(0)}_{L^2}^{\frac{d}{\alpha q}}.
	\end{split}
\end{equation*}
As we have $\frac{d}{\alpha q} > 0$, so we can do the following manipulation:
\begin{equation*}
	\frac{d}{dt}\norm{u_N}_{H^{-\alpha}}^{\frac{d}{\alpha q}}\geq - \frac{d}{\alpha q} C  \norm{\Grad U_N}_{L^q} \norm{u_N(0)}_{L^2}^{\frac{d}{\alpha q}},
\end{equation*}
which integrates to
\begin{equation*}
	\norm{u_N(t)}_{H^{-\alpha}}\geq \left[\norm{u_N(0)}_{H^{-\alpha}}^{\frac{d}{\alpha q}}- \frac{d}{\alpha q} C  \int_0^t\norm{\Grad U_N(s)}_{L^q} ds \norm{u_N(0)}_{L^2}^{\frac{d}{\alpha q}}\right]^{\frac{\alpha q}{d}}.
\end{equation*}
Since we have $\Grad U \in L^\infty(0,\infty;L^q(\dom))$, above expression simplifies to
\begin{equation}\label{eq:minimalmixingcase1}
	\norm{u_N(t)}_{H^{-\alpha}}\geq \left[\norm{u_N(0)}_{H^{-\alpha}}^{\frac{d}{\alpha q}}- t\frac{C}{\alpha }   \norm{\Grad U_N}_{L^\infty_tL^q_x} \norm{u_N(0)}_{L^2}^{\frac{d}{\alpha q}}\right]^{\frac{\alpha q}{d}}.
\end{equation}
Now $u_N(0)\to u_0$ in $L^2(\dom)$ and $\Grad U_N\to \Grad U$ in $L^\infty([0,T],L^q(\dom))$ for any $T>0$, as $N\to \infty$ and using that $L^2$ is compactly embedded in $H^{-\alpha}$ and the time continuity estimate from Lemma~\ref{lem:existence}, we can use the Aubin-Lions lemma~\cite{Simon1987} to obtain that $u_N\to u$ in $C([0,T];H^{-\alpha}(\dom))$ up to a subsequence as $N\to\infty$ and thus we have in the limit:
\begin{equation}\label{eq:minimalmixingcase1limit}
	\norm{u(t)}_{H^{-\alpha}}\geq \left[\norm{u_0}_{H^{-\alpha}}^{\frac{d}{\alpha q}}- t\frac{C}{\alpha}   \norm{\Grad U}_{L^\infty_tL^q_x} \norm{u_0}_{L^2}^{\frac{d}{\alpha q}}\right]^{\frac{\alpha q}{d}}.
\end{equation}

Thus the minimal mixing time is given by
\begin{equation*}
	t_{\min} =  \frac{\alpha\norm{u_0}_{H^{-\alpha}}^{\frac{d}{\alpha q}}}{ C\norm{u_0}_{L^2}^{\frac{d}{\alpha q}} \norm{\Grad U}_{L^\infty_t L^{q}_x}  }.
\end{equation*}
\end{proof}
\subsection{Case $q(1-\alpha)>d$ }
This includes the case that $q=\infty$ when $\alpha<1$.
\begin{lemma}\label{lem:case2}
	Let $q$, $\alpha$, and $d$ satisfy either $q(1-\alpha)>d$, or $q=\infty$ and $\alpha<1$. Then, we have for any $t\geq 0$,
\begin{equation*}
	\norm{u_N(t)}_{H^{-\alpha}}\geq \left[ \norm{u_N(0)}_{H^{-\alpha}}^{\frac{1-\alpha}{\alpha}} - t \frac{1-\alpha}{\alpha} C\norm{u_N(0)}_{L^2}^{\frac{1-\alpha}{\alpha}}\norm{U_N}_{L^\infty_t W^{1,q}_x} \right]^{\frac{\alpha}{1-\alpha}}.
\end{equation*}
	In particular, we have for any limit $u$ of the sequence $u_N$, 
\begin{equation*}
	\norm{u(t)}_{H^{-\alpha}}\geq \left[ \norm{u_0}_{H^{-\alpha}}^{\frac{1-\alpha}{\alpha}} - t \frac{1-\alpha}{\alpha} C\norm{u_0}_{L^2}^{\frac{1-\alpha}{\alpha}}\norm{U}_{L^\infty_t W^{1,q}_x} \right]^{\frac{\alpha}{1-\alpha}}.
\end{equation*}
	Thus the minimal mixing time is given by
	\begin{equation*}
		t_{\min}= \frac{\alpha\norm{u_0}_{H^{-\alpha}}^{\frac{1-\alpha}{\alpha}}}{(1-\alpha) C\norm{u_0}_{L^2}^{\frac{1-\alpha}{\alpha}} \norm{U}_{L^\infty_t W^{1,q}_x}  }.
	\end{equation*}
\end{lemma}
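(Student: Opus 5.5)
We again start from the lower bound~\eqref{eq:tia}. The difference from Lemma~\ref{lem:case1} is that we now put $|\Grad|^\alpha U_N$ in $L^\infty$, i.e.\ we take $r=\infty$, so that $\tfrac{2r}{r-2}=2$. Then~\eqref{eq:tia} reads
\begin{equation*}
\frac12\frac{d}{dt}\norm{u_N}_{H^{-\alpha}}^2\geq -\norm{|\Grad|^\alpha U_N}_{L^\infty}\norm{|\Grad|^\alpha\varphi_N}_{L^2}\norm{\Grad\varphi_N}_{L^2}.
\end{equation*}

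The first step is to bound $\norm{|\Grad|^\alpha U_N}_{L^\infty}$ uniformly in $N$ when $q(1-\alpha)>d$. Since $|\Grad|^\alpha U_N$ has $1-\alpha$ derivatives in $L^q$ (its $W^{1-\alpha,q}$ norm is controlled by $\norm{\Grad U_N}_{L^q}$, as in~\eqref{eq:Ur}), the fractional Morrey embedding $W^{s,q}\hookrightarrow C^{0,s-d/q}\subset L^\infty$ applies with $s=1-\alpha$, because $sq>d$. Using the periodic setting and the mean-zero property to absorb lower order terms, this gives $\norm{|\Grad|^\alpha U_N}_{L^\infty}\le C\norm{U_N}_{W^{1,q}}$.

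For $q=\infty$ and $\alpha<1$ we cannot use a Sobolev route, since $L^\infty$ multipliers are not bounded. Instead we use that $|\Grad|^\alpha U_N$ is, up to a Riesz-type operator, $|\Grad|^{\alpha-1}$ applied to $\Grad U_N$. One can then bound it in $L^\infty$ by $C\norm{U_N}_{W^{1,\infty}}$ via the embedding $C^{0,1}\subset C^{0,\beta}$ with $\beta>\alpha$, and the fact that $|\Grad|^\alpha$ maps $C^{0,\beta}$ into $C^{0,\beta-\alpha}\subset L^\infty$ on the torus. Alternatively, one can reduce to a finite $q$ with $q(1-\alpha)>d$ using $\norm{U}_{W^{1,q}}\le C\norm{U}_{W^{1,\infty}}$ on the bounded torus, which is the simplest route and the one I would take. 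Uniformity in $N$ requires care because $P_N$ is not bounded on $L^\infty$. I would therefore apply the embedding to $U_N$ with $q$ finite, where $P_N$ is bounded on $W^{1,q}$ for $1<q<\infty$. I expect this step to be the main technical obstacle.

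Next, $\norm{|\Grad|^\alpha\varphi_N}_{L^2}=\norm{u_N}_{H^{-\alpha}}$, and~\eqref{eq:fractionalL2} gives $\norm{\Grad\varphi_N}_{L^2}\le C\norm{u_N}_{H^{-\alpha}}^{\frac{2\alpha-1}{\alpha}}\norm{u_N(0)}_{L^2}^{\frac{1-\alpha}{\alpha}}$. Here we use the conservation $\norm{u_N(t)}_{L^2}=\norm{u_N(0)}_{L^2}$ from Lemma~\ref{lem:existence}. Combining these bounds yields
\begin{equation*}
\frac12\frac{d}{dt}\norm{u_N}_{H^{-\alpha}}^2\ge -C\norm{U_N}_{W^{1,q}}\norm{u_N}_{H^{-\alpha}}^{\frac{3\alpha-1}{\alpha}}\norm{u_N(0)}_{L^2}^{\frac{1-\alpha}{\alpha}}.
\end{equation*}
Since $2-\frac{3\alpha-1}{\alpha}=\frac{1-\alpha}{\alpha}>0$, we set $Y=\norm{u_N}_{H^{-\alpha}}^{\frac{1-\alpha}{\alpha}}$. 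Then $Y'\ge -\frac{1-\alpha}{\alpha}C\norm{U_N}_{W^{1,q}}\norm{u_N(0)}_{L^2}^{\frac{1-\alpha}{\alpha}}$, and integrating in time gives the stated bound for $u_N$.

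Finally, to pass to the limit we proceed exactly as in Lemma~\ref{lem:case1}. Aubin--Lions, using the time-derivative bound of Lemma~\ref{lem:existence} and the compact embedding $L^2\Subset H^{-\alpha}$, gives $u_N\to u$ in $C([0,T];H^{-\alpha})$. Together with $u_N(0)\to u_0$ in $L^2$ and $\norm{U_N}_{W^{1,q}}\le C\norm{U}_{W^{1,q}}$, this yields the estimate for $u$. Setting the bracket to zero gives $t_{\min}$.
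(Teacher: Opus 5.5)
Your proposal is correct and follows the paper's proof essentially step for step: fractional Morrey with $s=1-\alpha$ puts $|\Grad|^\alpha U_N$ in $L^\infty$, then the interpolation bound~\eqref{eq:fractionalL2} is combined with $L^2$ conservation, the substitution $Y=\norm{u_N}_{H^{-\alpha}}^{\frac{1-\alpha}{\alpha}}$ is made, and Aubin--Lions passes to the limit. Your reduction of the case $q=\infty$ to a finite $q$ with $q(1-\alpha)>d$, using that $P_N$ is uniformly bounded on $W^{1,q}$ only for $1<q<\infty$, is a refinement rather than a different route: the paper passes over this point, even though Theorem~\ref{thm:fractionalmorrey} is stated only for $p<\infty$.
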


\begin{proof}

In this case, we obtain that $|\Grad|^\alpha U$ is actually H\"older continuous by~\cite[Thm. 8.2]{DiNezza2012}, restated in the Appendix as Theorem~\ref{thm:fractionalmorrey}. We first note that
\begin{equation*}
	\norm{|\Grad|^\alpha U}_{L^\infty} \leq C \norm{|\Grad|^\alpha U}_{W^{1-\alpha,q}}\approx C \norm{U}_{W^{1,q}},
\end{equation*}
and the same inequality holds true for $U_N$ uniformly in $N$. 
From ~\eqref{eq:tia}, we, therefore, obtain:
\begin{equation*}
	\begin{split}
	\frac12\frac{d}{dt}\norm{u_N}^2_{H^{-\alpha}}&\geq -\norm{|\Grad|^\alpha U_N}_{L^\infty}\norm{|\Grad|^\alpha \varphi_N}_{L^2}\norm{\Grad \varphi_N}_{L^2}\\
	&\geq - C \norm{U_N}_{W^{1,q}}\norm{u_N}_{H^{-\alpha}}\norm{\Grad\varphi_N}_{L^2}.
	\end{split}
\end{equation*}
Next, applying the fractional interpolation inequality~\eqref{eq:fractionalGN} with $p = p_1=p_2=2$ and $\theta=\frac{2\alpha-1}{\alpha}$, we find that
\begin{equation*}
	\norm{\Grad\varphi_N}_{L^2}\leq C \norm{\varphi_N}_{W^{\alpha,2}}^{\frac{2\alpha-1}{\alpha}} \norm{\varphi_N}_{W^{2\alpha,2}}^{\frac{1-\alpha}{\alpha}} = C \norm{u_N}_{H^{-\alpha}}^{\frac{2\alpha-1}{\alpha}}\norm{u_N(0)}_{L^2}^{\frac{1-\alpha}{\alpha}}.
\end{equation*}
Substituting this into the previous equation, we get
\begin{equation*}
	\frac12\frac{d}{dt}\norm{u_N}^2_{H^{-\alpha}}\geq - C \norm{U_N}_{W^{1,q}}\norm{u_N}_{H^{-\alpha}}^{\frac{3\alpha-1}{\alpha}}\norm{u_N(0)}_{L^2}^{\frac{1-\alpha}{\alpha}}.
\end{equation*}
Note that $1-\frac{3\alpha-1}{\alpha}=\frac{1-2\alpha}{\alpha}>-1$ since $\alpha<1$, thus the following manipulation is valid:
\begin{equation*}
	\frac{d}{dt}\norm{u_N}^{\frac{1-\alpha}{\alpha}}_{H^{-\alpha}}\geq -\frac{1-\alpha}{\alpha} C \norm{U_N}_{W^{1,q}}\norm{u_N(0)}_{L^2}^{\frac{1-\alpha}{\alpha}}.
\end{equation*}
After integrating and taking roots, we get
\begin{equation*}
	\norm{u_N(t)}_{H^{-\alpha}}\geq \left[ \norm{u_N(0)}_{H^{-\alpha}}^{\frac{1-\alpha}{\alpha}} -  \frac{1-\alpha}{\alpha} C \norm{u_N(0)}_{L^2}^{\frac{1-\alpha}{\alpha}}\int_0^t\norm{U_N(s)}_{W^{1,q}}ds \right]^{\frac{\alpha}{1-\alpha}}.
\end{equation*}
Since we assume that the gradient of $U$ is bounded in the $L^q$ norm uniformly in time, we obtain
\begin{equation}\label{eq:minimalmixing2N}
	\norm{u_N(t)}_{H^{-\alpha}}\geq \left[ \norm{u_N(0)}_{H^{-\alpha}}^{\frac{1-\alpha}{\alpha}} - t \frac{1-\alpha}{\alpha} C \norm{u_N(0)}_{L^2}^{\frac{1-\alpha}{\alpha}}\norm{U_N}_{L^\infty_t W^{1,q}_x} \right]^{\frac{\alpha}{1-\alpha}}.
\end{equation}
As in the previous lemma, we use that $u_N(0)\to u_0$ in $L^2$ (and thus by the continuity of the embedding of $L^2$ in $H^{-\alpha}$) and that $U_N\to U$ and that $u_N\to u$ in $C(0,T;H^{-\alpha}(\dom))$ up to a subsequence for any $T>0$ by the Aubin--Lions lemma, to obtain:
\begin{equation}\label{eq:minimalmixing2}
	\norm{u(t)}_{H^{-\alpha}}\geq \left[ \norm{u_0}_{H^{-\alpha}}^{\frac{1-\alpha}{\alpha}} - t \frac{1-\alpha}{\alpha} C \norm{u_0}_{L^2}^{\frac{1-\alpha}{\alpha}}\norm{U}_{L^\infty_t W^{1,q}_x} \right]^{\frac{\alpha}{1-\alpha}}.
\end{equation}
This yields a minimal mixing time
\begin{equation*}
	t_{\min}= \frac{\alpha\norm{u_0}_{H^{-\alpha}}^{\frac{1-\alpha}{\alpha}}}{(1-\alpha) C \norm{u_0}_{L^2}^{\frac{1-\alpha}{\alpha}} \norm{U}_{L^\infty_t W^{1,q}_x}  }.
\end{equation*}
\end{proof}

\subsection{Case $q(1-\alpha)=d$ }
\begin{lemma}\label{lem:case3}
	Let $q$, $\alpha$ and $d$ satisfy $q(1-\alpha)=d$. Then, we have for any $t\geq 0$,
	\begin{equation*}
			\norm{u_N(t)}_{H^{-\alpha}}\geq \left[\norm{u_N(0)}_{H^{-\alpha}}^{\frac{1-\alpha}{\alpha} +\frac{d}{\alpha r}}- \left(\frac{1-\alpha}{\alpha}+\frac{d}{\alpha r}\right)  C_r  t \norm{\Grad U_N}_{L^\infty_tL^q_x}  \norm{u_N(0)}_{L^2}^{\frac{1-\alpha}{\alpha } + \frac{d}{\alpha r}}\right]^{\frac{\alpha r}{r-\alpha r +d}},
	\end{equation*}
	where $r$ can be chosen arbitrarily large.
	In particular, we have for any limit $u$ of the sequence $u_N$, 
	\begin{equation*}
		\norm{u(t)}_{H^{-\alpha}}\geq \left[\norm{u_0}_{H^{-\alpha}}^{\frac{1-\alpha}{\alpha} +\frac{d}{\alpha r}}- \left(\frac{1-\alpha}{\alpha}+\frac{d}{\alpha r}\right)  C_r  t \norm{\Grad U}_{L^\infty_tL^q_x}  \norm{u_0}_{L^2}^{\frac{1-\alpha}{\alpha } + \frac{d}{\alpha r}}\right]^{\frac{\alpha r}{r-\alpha r +d}},
	\end{equation*}
	for any $r\in [1,\infty)$.
	
	Thus the minimal mixing time is given by
	\begin{equation*}
	t_{\min} = \frac{\alpha r\norm{u_0}_{H^{-\alpha}}^{\frac{1-\alpha}{\alpha} +\frac{d}{\alpha r}}}{(r-\alpha r +d) C_r\norm{u_0}_{L^2}^{\frac{1-\alpha}{\alpha } + \frac{d}{\alpha r}} \norm{\Grad U}_{L^\infty_t L^{q}_x}  },
	\end{equation*}
\end{lemma}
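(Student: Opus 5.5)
We start again from the energy-type inequality \eqref{eq:tia}. In the borderline case $q(1-\alpha)=d$ the embedding $W^{1-\alpha,q}\hookrightarrow L^\infty$ fails, but the fractional Sobolev embedding of \cite[Thm. 6.5]{DiNezza2012} (Theorem~\ref{thm:fractionalsobolev}) still gives $W^{1-\alpha,q}\hookrightarrow L^r$ for every finite $r\in[q,\infty)$, with a constant $C_r$ that blows up as $r\to\infty$. Applying it componentwise to $|\Grad|^\alpha U_N$, as in \eqref{eq:Ur}, we get
\begin{equation*}
\norm{|\Grad|^\alpha U_N}_{L^r}\leq C_r\norm{|\Grad|^\alpha U_N}_{W^{1-\alpha,q}}\leq C_r\norm{\Grad U_N}_{L^q},
\end{equation*}
uniformly in $N$. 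Here $r$ is a free parameter rather than being determined by $q$.

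Next we treat the two $\varphi_N$ factors exactly as in Lemma~\ref{lem:case1}. For $\norm{\Grad\varphi_N}_{L^2}$ we use \eqref{eq:fractionalL2}, which gives the factor $\norm{u_N}_{H^{-\alpha}}^{\frac{2\alpha-1}{\alpha}}\norm{u_N(0)}_{L^2}^{\frac{1-\alpha}{\alpha}}$; this uses $\alpha\geq\frac12$ and the $L^2$ conservation $\norm{u_N(t)}_{L^2}=\norm{u_N(0)}_{L^2}$. For $\norm{|\Grad|^\alpha\varphi_N}_{L^{\frac{2r}{r-2}}}$ we use the fractional Gagliardo--Nirenberg inequality \eqref{eq:otherterm} with the exponent $\theta=\frac{\alpha r-d}{\alpha r}$ from \eqref{eq:howtogettheta}. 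We keep $\theta$ in this form instead of substituting the value of $r$ from Lemma~\ref{lem:case1}.

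We need $\theta\in(0,1]$, that is $r>d/\alpha$. Since $r$ may be taken as large as we like, this is harmless. We should also check that the exceptional cases of Theorem~\ref{thm:fractionalGNS} do not arise; with $s_1=0$, $s_2=\alpha<1$ and $p_1=p_2=2$ they do not. If they did, we would simply restrict to $r$ large.

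Multiplying the three bounds, the power of $\norm{u_N}_{H^{-\alpha}}$ is
\begin{equation*}
\frac{2\alpha-1}{\alpha}+\frac{\alpha r-d}{\alpha r}=2-\beta,\qquad \beta:=\frac{1-\alpha}{\alpha}+\frac{d}{\alpha r}>0.
\end{equation*}
The power of $\norm{u_N(0)}_{L^2}$ is $\frac{1-\alpha}{\alpha}+\frac{d}{\alpha r}=\beta$. This gives
\begin{equation*}
\frac12\frac{d}{dt}\norm{u_N}_{H^{-\alpha}}^2\geq -C_r\norm{\Grad U_N}_{L^q}\norm{u_N}_{H^{-\alpha}}^{2-\beta}\norm{u_N(0)}_{L^2}^{\beta}.
\end{equation*}
Since $\beta>0$, this is equivalent to $\frac{d}{dt}\norm{u_N}_{H^{-\alpha}}^{\beta}\geq-\beta C_r\norm{\Grad U_N}_{L^q}\norm{u_N(0)}_{L^2}^{\beta}$. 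Integrating in time and raising to the power $1/\beta=\frac{\alpha r}{r-\alpha r+d}$ yields the claimed bound for $u_N$.

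The passage to the limit is identical to Lemma~\ref{lem:case1}. We have $u_N(0)\to u_0$ in $L^2$ and $\Grad U_N\to\Grad U$ in $L^\infty_tL^q_x$. Combining the time-derivative bound of Lemma~\ref{lem:existence} with the Aubin--Lions lemma gives $u_N\to u$ in $C([0,T];H^{-\alpha})$. The minimal mixing time is then read off by setting the bracket equal to zero.

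The only delicate step is the endpoint Sobolev embedding together with the bookkeeping of $r$. We must make sure that Theorem~\ref{thm:fractionalsobolev} is stated (or can be extended, for instance via the inclusion $W^{1-\alpha,q}\subset W^{s,q}$ with $s<1-\alpha$ on the torus) so that it covers $sq=d$ with an arbitrary finite target exponent. Everything else repeats Lemma~\ref{lem:case1} with $r$ left free.
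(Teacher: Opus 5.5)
Your proposal is correct and follows essentially the same route as the paper's proof. The steps match: the endpoint bound $\norm{|\Grad|^\alpha U_N}_{L^r}\leq C_r\norm{\Grad U_N}_{L^q}$ with $r$ left free, then \eqref{eq:fractionalL2} and \eqref{eq:otherterm} with $\theta=\frac{\alpha r-d}{\alpha r}$, the exponent $\beta=\frac{1-\alpha}{\alpha}+\frac{d}{\alpha r}$, integration, and the Aubin--Lions limit. The only real difference is the citation for the borderline embedding: the paper invokes the $sp=d$ result \cite[Thm 6.9]{DiNezza2012} directly, whereas Theorem~\ref{thm:fractionalsobolev} (which needs $sp<d$) does not apply as you first claim. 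Your fallback of lowering the smoothness to $s<1-\alpha$ on the bounded torus, applied to the mean-zero function $|\Grad|^\alpha U_N$, does recover every finite $r$. Your restriction to $r$ large enough that $\theta\in(0,1)$ is also more careful than the paper's ``any $r\in[1,\infty)$''.
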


\begin{proof}

In this cases, we obtain
\begin{equation}\label{eq:sobolevedgecase}
	\norm{|\Grad|^\alpha U}_{L^r}\leq C_r \norm{\Grad U}_{L^q},
\end{equation}
for any $r\in [1,\infty)$, see~\cite[Thm 6.9]{DiNezza2012}. Similar to the computations done above in Lemma~\ref{lem:case1}, c.f. Equation~\eqref{eq:howtogettheta}, we obtain 
\begin{equation*}
	\frac12\frac{d}{dt}\norm{u}_{H^{-\alpha}}^2\geq -C_r \norm{\Grad U}_{L^q}\norm{u}_{H^{-\alpha}}^{\frac{2\alpha-1}{\alpha}+1-\frac{d}{\alpha r}}\norm{u_0}_{L^2} ^{\frac{1-\alpha}{\alpha } + \frac{d}{\alpha r}}
\end{equation*}
We manipulate this to obtain
\begin{equation*}
	\frac{d}{dt}\norm{u}_{H^{-\alpha}}^{\frac{1-\alpha}{\alpha} +\frac{d}{\alpha r}}\geq - \left(\frac{1-\alpha}{\alpha}+\frac{d}{\alpha r}\right) C_r \norm{\Grad U}_{L^q} \norm{u_0}_{L^2}^{\frac{1-\alpha}{\alpha } + \frac{d}{\alpha r}},
\end{equation*}
which  integrates to 
\begin{equation*}
	\norm{u(t)}_{H^{-\alpha}}\geq \left[\norm{u_0}_{H^{-\alpha}}^{\frac{1-\alpha}{\alpha} +\frac{d}{\alpha r}}- \left(\frac{1-\alpha}{\alpha}+\frac{d}{\alpha r}\right)  C_r \int_0^t\norm{\Grad U(s)}_{L^q} ds \norm{u_0}_{L^2}^{\frac{1-\alpha}{\alpha } + \frac{d}{\alpha r}}\right]^{\frac{\alpha r}{r-\alpha r +d}}.
\end{equation*}
Now using that $\Grad U \in L^\infty(0,\infty;L^q(\dom))$, above bound simplifies to
\begin{equation}\label{eq:minimalmixingcasex}
	\norm{u(t)}_{H^{-\alpha}}\geq \left[\norm{u_0}_{H^{-\alpha}}^{\frac{1-\alpha}{\alpha} +\frac{d}{\alpha r}}- \left(\frac{1-\alpha}{\alpha}+\frac{d}{\alpha r}\right)  C_r  t \norm{\Grad U}_{L^\infty_tL^q_x}  \norm{u_0}_{L^2}^{\frac{1-\alpha}{\alpha } + \frac{d}{\alpha r}}\right]^{\frac{\alpha r}{r-\alpha r +d}}.
\end{equation}
where $r$ can be chosen arbitrarily large but the constant $C_r$ depends on $r$ due to the Sobolev embedding~\eqref{eq:sobolevedgecase}. 
The minimal mixing time is given by 
\begin{equation*}
	t_{\min} = \frac{\alpha r\norm{u_0}_{H^{-\alpha}}^{\frac{1-\alpha}{\alpha} +\frac{d}{\alpha r}}}{(r-\alpha r +d) C_r\norm{u_0}_{L^2}^{\frac{1-\alpha}{\alpha } + \frac{d}{\alpha r}} \norm{\Grad U}_{L^\infty_t L^{q}_x}  },
\end{equation*}
where again $r$ can be chosen arbitrarily large.
\end{proof}
\subsection{Case $q=\infty$, $\alpha=1$}
Finally, we treat the special case where $q=\infty$ and $\alpha=1$. 
In this case, we have
\begin{align}
	\frac12\frac{d}{dt}\norm{u_N}^2_{H^{-1}(\dom)}\geq -\norm{\Grad\varphi_N}_{L^{2}(\dom)}^2\norm{\Grad U_N}_{L^\infty(\dom)}=-\norm{u_N}^2_{H^{-1}} \norm{\Grad U_N}_{L^\infty(\dom)}
\label{eq:Linfty}
\end{align}
thus with Gr\"onwall's inequality, we get
\begin{equation*}
	\norm{u_N(t)}_{H^{-1}}^2 \geq \norm{u_N(0)}_{H^{-1}}^2 \exp\left(-2\int_0^t\norm{\Grad U_N(s)}_{L^\infty} ds \right).
\end{equation*}
Sending $N\to \infty$ (note that the limit is unique in this case by Lemma~\ref{lem:stability}), we obtain 
\begin{equation*}
	\norm{u(t)}_{H^{-1}}^2 \geq \norm{u_0}_{H^{-1}}^2 \exp\left(-2\int_0^t\norm{\Grad U(s)}_{L^\infty} ds \right).
\end{equation*}

\section{Numerical Algorithm and Results}
\label{sec: Numerical algorithm}
In this section, we perform numerical computations for mixing in two dimensions and provide evidence that an exponential mixing rate is achievable in the specific case \(q=2\), that is, under the constraint \(\|U\|_{W^{1,2}} \leq 1\). We work on a torus $\mathbb{T}^2$ of side length \(L=2\pi\). In this setting, we provide a numerical indication that the optimal mixing rate is at least exponentially fast.

Our approach is to choose \(U\) so as to maximize, instantaneously, the decay of the \({H}^{-\alpha}\)-norm of \(u\). This idea was previously considered by Lin, Thiffeault and Doering \cite{Lin2011} in the context of scalar mixing. Multiplying the governing equation by \(|\nabla|^{-2\alpha}u\) and integrating by parts yields
\begin{align}
\frac{1}{2}\frac{d}{dt}\|u\|_{{H}^{-\alpha}}^2
=
\int_{\mathbb{T}^d}
U \cdot \mathbb{P}\big(\nabla |\nabla|^{-2\alpha}u \cdot u\big)\,{\rm d}x.
\end{align}
Therefore, in order to maximize the instantaneous decay of \(\|u\|_{{H}^{-\alpha}}\) while satisfying the constraint \(\|U\|_{W^{1,2}} \leq 1\), we choose
\begin{align}
U
=
-
\frac{(-\Delta)^{-1}\mathbb{P}\big(\nabla |\nabla|^{-2\alpha}u \cdot u\big)}
{\big\langle \big|\nabla (-\Delta)^{-1}\mathbb{P}\big(\nabla |\nabla|^{-2\alpha}u \cdot u\big)\big|^2 \big\rangle^{1/2}}.
\label{eqn: optimal U}
\end{align}
Here, \(\langle \cdot \rangle\) denotes integration over the torus and $\mathbb{P}$ is the Leray projector. Although the choice \eqref{eqn: optimal U} need not be the global optimizer, it provides a natural and computationally convenient strategy to obtain an upper bound on the optimal mixing rate.

Next, we apply the Fourier truncation for the velocity fields described in \Cref{sec: existence}, which leads to the system \eqref{eq:numscheme}. This time, in \eqref{eq:numscheme}, \(U_N\) denotes the projection of \eqref{eqn: optimal U} onto the first \(N\) Fourier modes. This yields a system of ODEs for the Fourier coefficients \(\hat u_k\), which we evolve in time using an adaptive RK45 scheme. In our computations, we take \(N=562\), corresponding to \(2N+1=1125\) modes in each spatial direction.

Using the Cartesian coordinates $(x,y)$ in this section, we consider an initial condition \(u_0\) defined through a stream function as
\[
u_0 \coloneqq \nabla^\perp \psi_0 \coloneqq (-\partial_y \psi_0,\partial_x \psi_0),
\]
where we choose $\psi_0$ so that \(u_0\) resembles a dipole flow. Specifically, we take
\begin{align}
\psi_0(x, y) = p_1 \left(\frac{x}{2 \pi}\right) p_2\left(\frac{y}{2 \pi}\right), \quad \text{where} \quad p_1(z) = 256 z^4 (1-z)^4 \text{ and } p_2(z) = \frac{8192}{27} z^3 (1-z)^3 (1 - 2 z). 
\end{align}

We now discuss our numerical observations, beginning with the case \(\alpha=1\), i.e., mixing is quantified through the decay of the \({H}^{-1}\)-norm of \(u\). A video of the simulation can be found here: 
\href{https://youtu.be/WI3ofKWohTw}{youtu.be/WI3ofKWohTw}.

\begin{figure}[htbp]
    \centering
    \includegraphics[width=\textwidth]{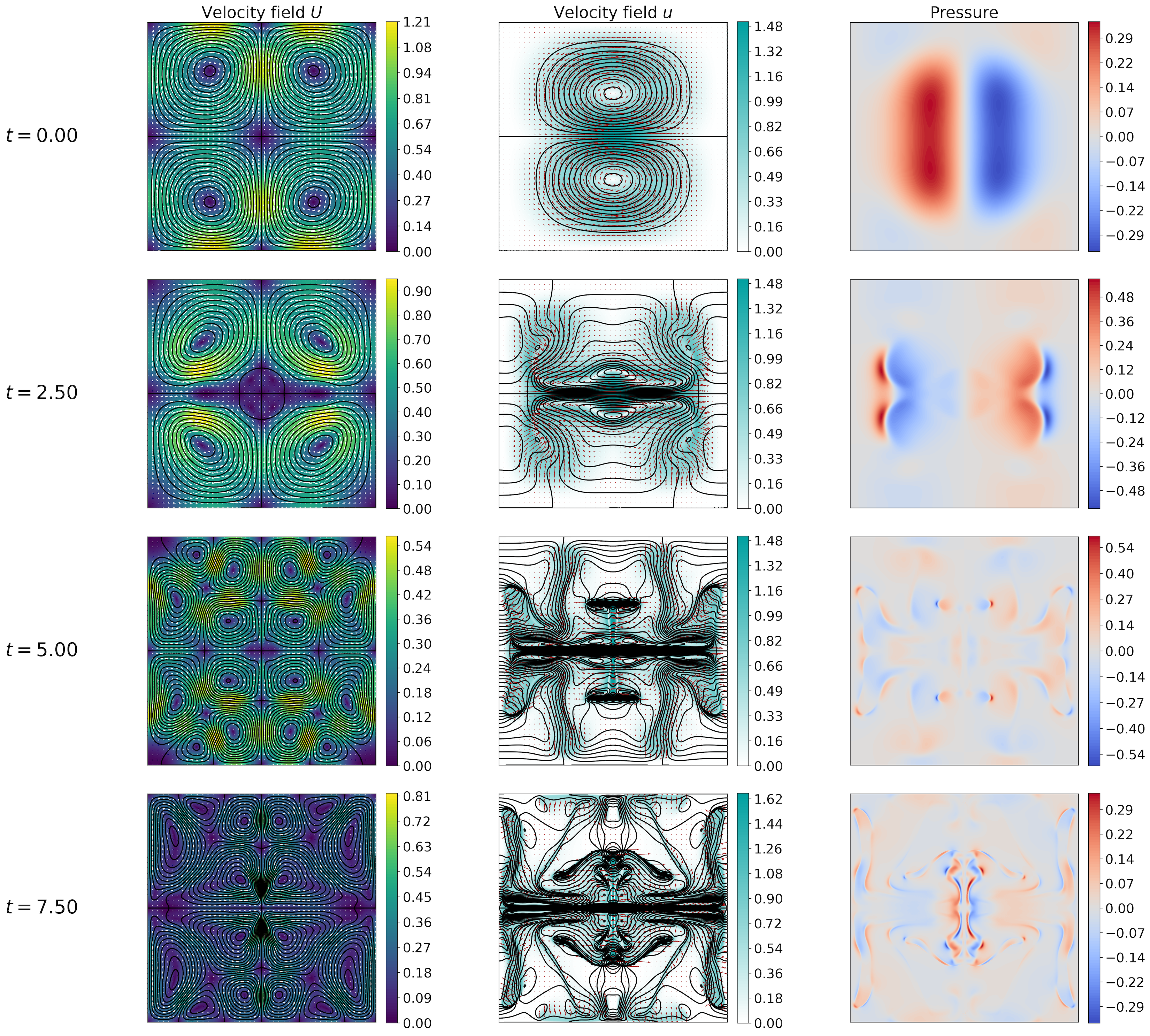}
    \caption{Snapshots of \(U\) (column 1), \(u\) (column 2) and \(p\) (column 3) at various times in the case \(\alpha = 1\). In columns 1 and 2, the colorbars represent the magnitudes of the velocity fields \(U\) and \(u\), respectively, while in column 3 the colorbar represents the pressure \(p\). The plots of \(U\) and \(u\) are overlaid with streamlines and arrows indicating the direction of the flow.}
    \label{fig:panel one}
\end{figure}

\begin{figure}[htbp]
    \centering
    \includegraphics[width=\textwidth]{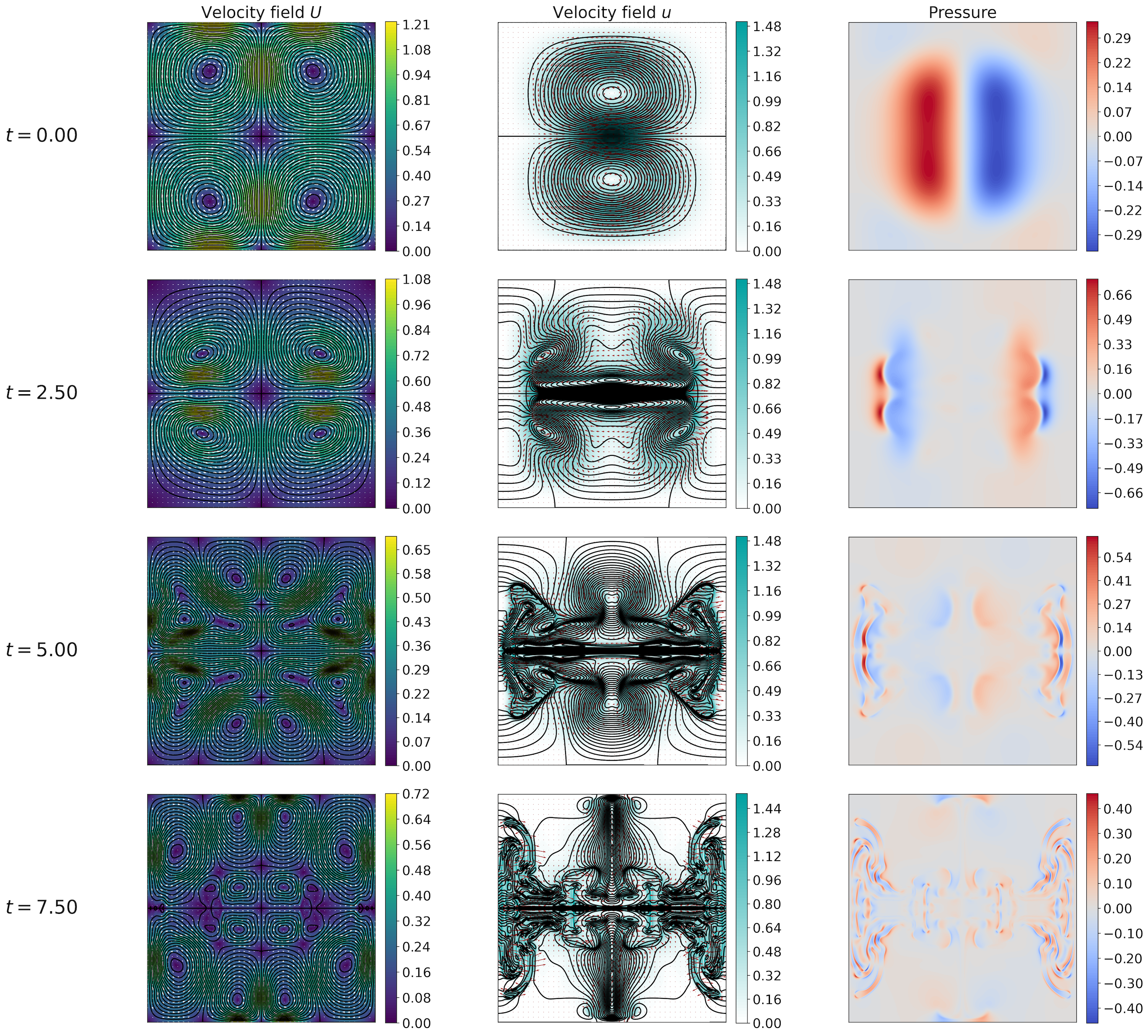}
    \caption{Same as \Cref{fig:panel one}, but for \(\alpha=\tfrac12\).}
    \label{fig:panel half}
\end{figure}

By construction, the initial velocity field \(u\) is a dipole. For this choice of \(u\), the vector field \(U\) determined through \eqref{eqn: optimal U} at time \(t=0\) takes the form of a counter-rotating cellular flow with four cells, while the pressure exhibits a bipolar structure as in \Cref{fig:panel one}. The role of this initial field \(U\) is to squeeze the streamlines of \(u\) toward the midplane and thereby generate a strong central shear layer. As time evolves, finer structures in \(u\) emerge through a repeated stretching and folding process. More precisely, the velocity field \(u\) becomes concentrated on increasingly thin filaments, resembling long and narrow shear layers. Away from these concentrated layers, \(u\) appears to be very small and on a large portion of the domain it is nearly stationary.

This feature of vector mixing is in stark contrast with scalar mixing. In the scalar case, long filaments may also form \cite{Lin2011}, but the scalar cannot concentrate on increasingly small subsets of the torus, since all \(L^p\)-norms, for \(p\in[1,\infty]\), are conserved. In the present vector setting, however, aside from the \(L^2\)-norm (energy), there is no analogous conservation law for the higher or lower \(L^p\)-norms. Hence concentration of the vector field \(u\) is, in principle, possible. This is indeed what we observe numerically.

The evolution of \(U\) is qualitatively different from that of \(u\). Although small scales also develop in \(U\), they do not appear as rapidly as in \(u\). Moreover, the structure of \(U\) remains predominantly cellular flows. In particular, \(U\) does not exhibit the same concentration phenomenon seen in \(u\). Instead, the thin shear layers in \(u\) form precisely along the interfaces between the counter-rotating cells of \(U\). Finally, the pressure \(p\) becomes even more concentrated as time evolves, especially in regions where the streamlines of \(u\) bend or turn around. Throughout the evolution, the norm \(\|u\|_{{H}^{-1}}(t)\) decays smoothly, in a manner consistent with an approximately exponential mixing regime (see \Cref{fig:norm one}).

Next, we briefly discuss the case \(\alpha=\tfrac12\), that is, when mixing is quantified through the decay of the \({H}^{-1/2}\)-norm of \(u\). \Cref{fig:panel half} shows a few snapshots and a complete video of the simulation can be found here:
\href{https://youtu.be/ykZvo9R21-Y}{youtu.be/ykZvo9R21-Y}. For an initial period of the evolution, up to approximately \(t=2\), the dynamics in the two cases \(\alpha=1\) and \(\alpha=\tfrac12\) appear qualitatively similar. Beyond this point, however, clear differences begin to emerge. 
For instance, around time \(t=3.3\), the \(\alpha=1\) evolution develops a filament along the vertical centerline, whereas no such filament development is observed in the \(\alpha=\tfrac12\) case.

In the \(\alpha=\tfrac12\) regime, small scales in \(u\) still develop, but they are distributed over a wider region of the domain and the shear layers are noticeably less concentrated. Correspondingly, the stretching and folding mechanism appears to be weaker than in the case \(\alpha=1\). Nevertheless, this evolution still appears to exhibit approximately exponential mixing, although with a smaller exponential rate (see \Cref{fig:norm half}). This is consistent with the fact that \( H^{-1}\) places comparatively less emphasis on smaller scales than \( H^{-1/2}\).

\begin{figure}[htbp]
    \centering
    \begin{subfigure}{0.49\textwidth}
        \centering
        \includegraphics[width=\textwidth]{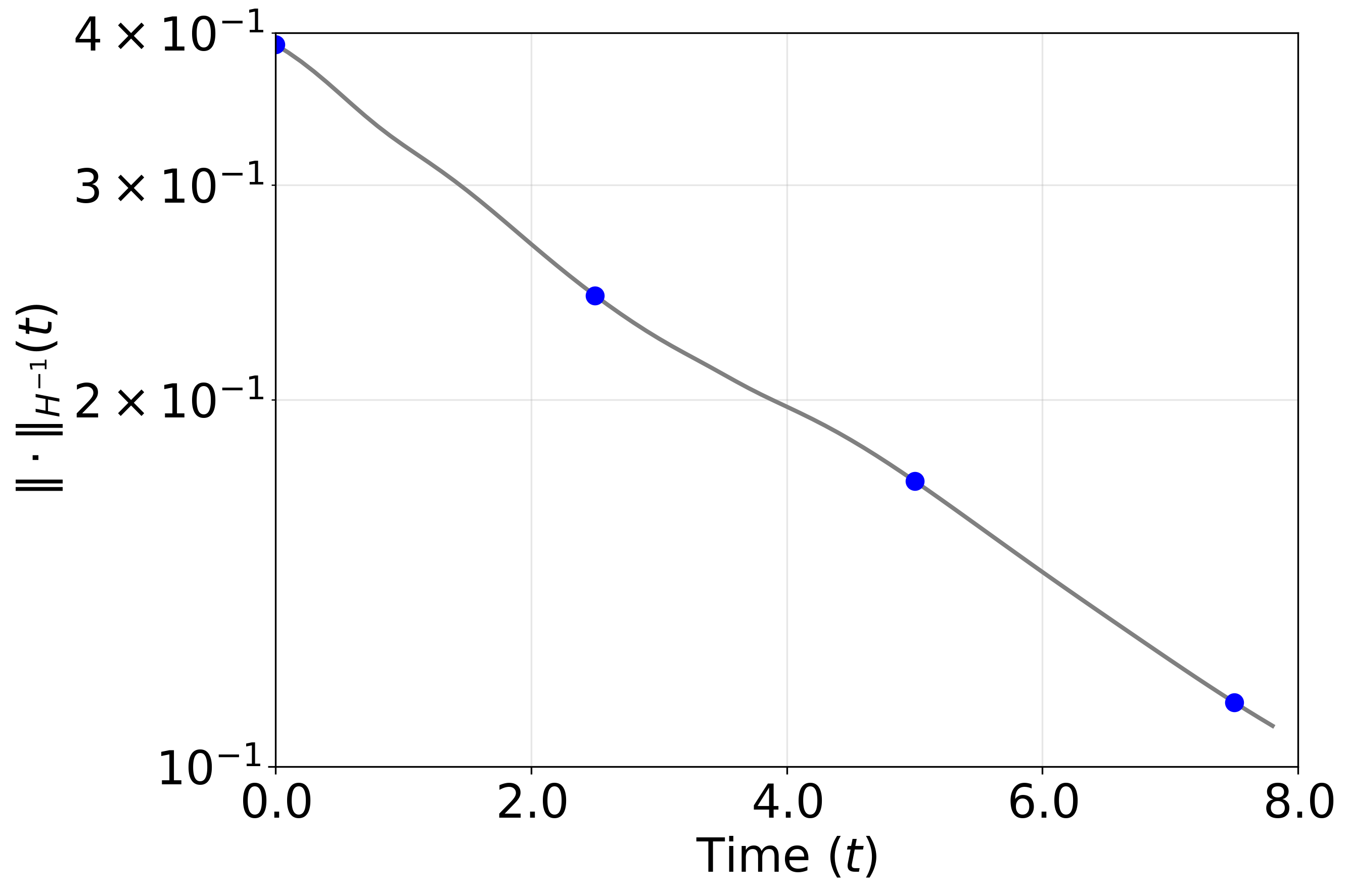}
        \caption{}
        \label{fig:norm one}
    \end{subfigure}
    \hfill
    \begin{subfigure}{0.49\textwidth}
        \centering
        \includegraphics[width=\textwidth]{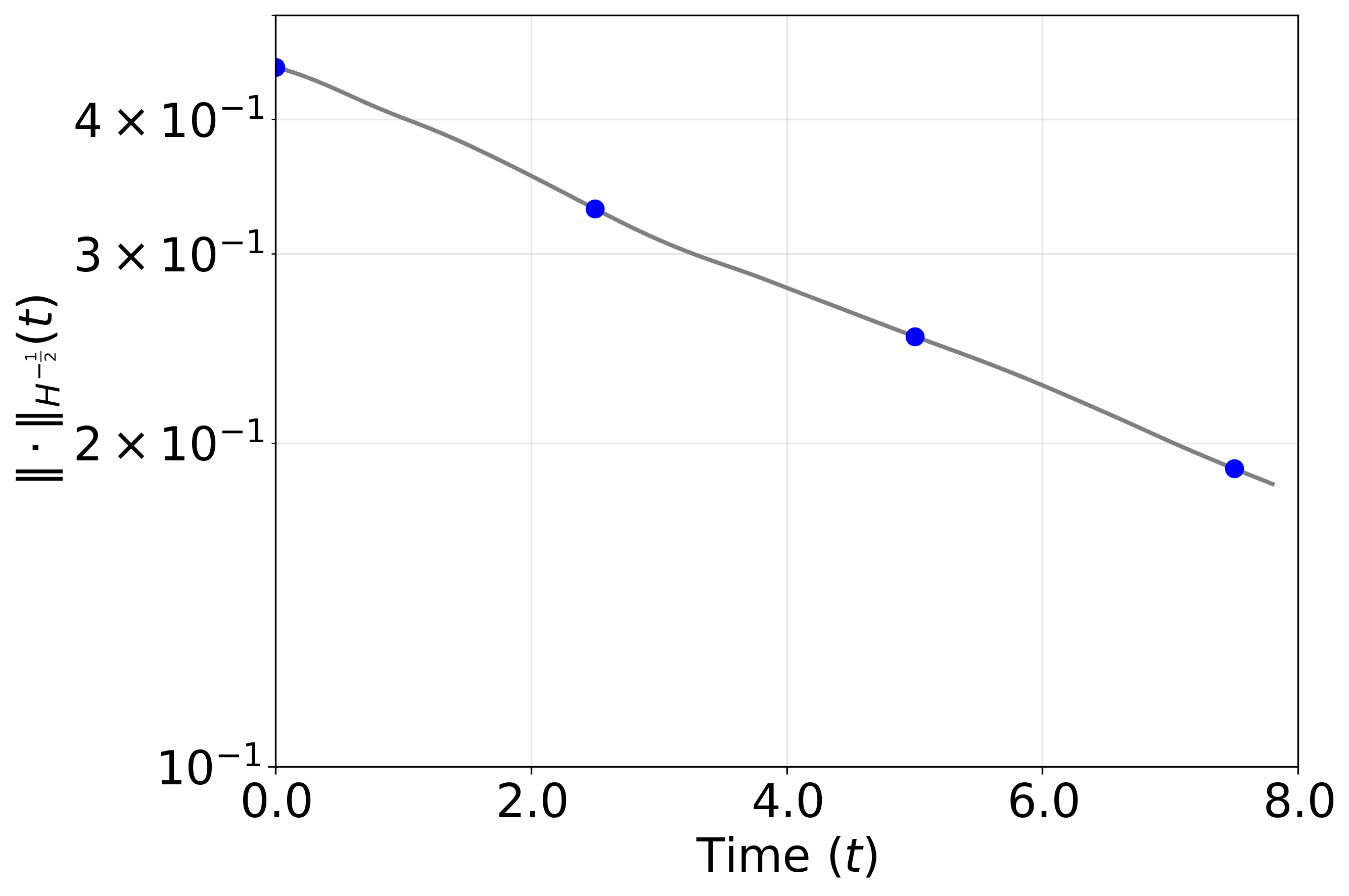}
        \caption{}
        \label{fig:norm half}
    \end{subfigure}
    \caption{shows mix norm $\norm{u(t, \cdot)}_{H^{-1}}$ and $\norm{u(t, \cdot)}_{H^{-\frac{1}{2}}}$ as a function of time. The resolution for this computation is $562 \times 562$.}
    \label{fig:norms vs time}
\end{figure}

\section{Discussion}
\label{sec: Discussion}
In this paper, we investigate the mixing of a divergence-free passive vector field \(u\) advected by another divergence-free vector field \(U\), where the evolution of \(u\) is governed by \eqref{eq:vectortransport}. As discussed in the introduction, this model serves as a testing ground for mechanisms of small-scale creation in divergence-free vector fields. The model \eqref{eq:vectortransport} is more tractable than the incompressible Euler equations, but at the same time substantially more challenging than the scalar transport equation due to the presence of pressure, which destroys the simple Lagrangian point of view available in the scalar setting. In this sense, model \eqref{eq:vectortransport} provides a useful framework for understanding the precise role of pressure in incompressible flows.

In \Cref{lem:existence} and \Cref{lem:stability}, we establish conditions on the vector field \(U\) that guarantee existence and uniqueness of solutions \(u\) to \eqref{eq:vectortransport}. We then derive lower bounds on mixing in \Cref{lem:case1}, \Cref{lem:case2} and \Cref{lem:case3} under a prescribed \(W^{1,q}\) constraint on \(U\). With the exception of the case \(q=\infty\), where we obtain an exponential lower bound, for finite values of \(q\) we are only able to prove a lower bound that allows mixing in finite time \(t_{\min}\). However, the time \(t_{\min}\) we obtain depends on the parameters \(q\), \(\alpha\) and \(d\). This finite-time lower bound stands in sharp contrast with the exponential lower bounds known for the scalar problem. We emphasize that the existing approaches that lead to exponential lower bounds in the scalar setting, for example, \cite{crippa2008regularity, seis2013maximal}, rely in an essential way on the Lagrangian viewpoint. Whether the finite-time lower bound obtained here can be improved to an exponential one remains an interesting open question.

We also carry out numerical simulations of mixing by choosing a field \(U\) that instantaneously maximizes the decay of the \({H}^{-\alpha}\)-norm of \(u\), in the spirit of the approach of Lin, Thiffeault and Doering \cite{Lin2011}. For both \(\alpha=\tfrac12\) and \(\alpha=1\), the numerical results indicate exponential mixing, suggesting that the optimal mixing rate is at least exponentially fast. One of the most striking features of the vector problem, which is in sharp contrast with passive scalar mixing, is that the passive vector field \(u\) not only develops small scales, but also concentrates on progressively smaller regions of the domain as time evolves.

There are several natural directions for future work. From the numerical point of view, one immediate next step is to extend the simulations to three dimensions. While one may still expect exponential mixing, it would be very interesting to find out the flow structures that emerge in \(u\) and \(U\), which are likely to differ from those observed in two dimensions. From the analysis point of view, the simulations suggest that \(u\) concentrates in thin regions, while the pressure becomes localized near regions where \(u\) bends or turns around. It would therefore be very interesting to investigate whether this numerically observed knowledge can be incorporated into a construction to provide a rigorous exponentially decreasing (in time) upper bound on the optimal mixing rate.

Finally, as promised in the introduction, we adopt a broader perspective and discuss how two conjectural questions for the Navier--Stokes equations can be posed at the level of the diffusive analogue of model \eqref{eq:vectortransport}, obtained by adding the term \(\nu \Delta u\) to the right-hand side. We believe that this model offers a simpler setting in which such questions may first be explored. Indeed, if they cannot be understood even in this simpler framework, then one should expect the corresponding questions for the Navier--Stokes equations to be considerably more difficult.

The first question is related to the fundamental phenomenon of \emph{anomalous dissipation} in turbulent flows . Informally, anomalous dissipation refers to the continual dissipation of energy in the vanishing viscosity limit \cite{sreenivasan1998update, frisch1995turbulence}. For the incompressible Navier--Stokes equations, this may be expressed as
\[
\varepsilon_\nu
=
\nu \langle |\nabla u_\nu|^2 \rangle
\geq c >0
\]
for all \(\nu \in (0,\nu_0]\), for some \(\nu_0>0\), where \(\langle \cdot \rangle\) denotes a suitable long-time volume average. Despite the abundance of observations supporting this phenomenon \cite{pearson2002measurements, kaneda2003energy}, to date there is still no rigorous example establishing it \cite{drivas2022self}.

Ideally, one would like to establish anomalous dissipation for a generic set of initial data. However, this appears to be an extremely difficult problem. In fact, proving anomalous dissipation even for a single initial condition is already highly nontrivial. The three main physical settings in which to study this question are:  
(i) a flow driven by a smooth large-scale forcing \(f \in C^\infty(\Omega)\), independent of \(\nu\),  
(ii) a boundary-driven flow, for example in a smooth domain, where energy is injected through the tangential velocity at the boundary,  
(iii) a freely decaying flow evolving from a smooth initial condition \(u_0\), independent of \(\nu\). In the third setting, the long-time average is naturally replaced by an average over a finite time interval.

In the context of the diffusive analogue of \eqref{eq:vectortransport}, a natural question is whether one can construct a family \(\{U_\nu\}\) exhibiting anomalous dissipation such that the corresponding solutions \(\{u_\nu\}\) exhibit the same property, in one of the three physical settings described above. A version of this question was recently considered in \cite{kumar2024almost}, where a weaker result was obtained. A family \(\{U_\nu\}\) was constructed for which the rate of energy dissipation tends to zero as \(\nu \to 0\), but only at a slow logarithmic rate, such that the same logarithmic decay was shown to be exhibited by the corresponding family \(\{u_\nu\}\).

Another question, we have in mind is related to singularity formation in the Navier–Stokes equation. For a smooth initial data, a classical conditional regularity result states that if the velocity $u$ belongs to the Prodi–Serrin class, i.e. $\frac{2}{p} + \frac{3}{q} = 1$, $q > 3$ (the case $q=3$ was covered by Escauriaza, Seregin, \v{S}ver\'ak \cite{escauriaza20023}), then $u$ is in fact a strong solution. Consequently, for blow-up to occur, the velocity must exit the Prodi–Serrin class. Motivated by this, we ask an analogous question at the level of the vector advection–diffusion equation. Suppose $U$ and $u$ satisfies the same smooth initial condition $u_0$, can one construct a velocity field $U$ that is smooth for all time $t < T$ and leaves the Prodi–Serrin class precisely at time $t = T$, in such a way that the corresponding solution $u$ also ceases to belong to the Prodi–Serrin class exactly at $t = T$?

\appendix
\section{Fractional Sobolev inequalities}
We recall the fractional Sobolev inequality from~\cite[Thm. 6.5]{DiNezza2012}:
\begin{theorem}[Fractional Sobolev inequality \cite{DiNezza2012}]
\label{thm:fractionalsobolev}
Let $s\in (0,1)$, $p\in [1,\infty)$ such that $sp<d$. Then there exists a positive constant $C=C(d,p,s)$ such that for any measurable function $f:\T^d\to \R$  we have
\begin{equation*}
	\norm{f}_{L^{p^*}}\leq C_S |f|_{W^{s,p}},
\end{equation*}
where the semi-norm on the right hand side is defined by 
\begin{equation*}
	|f|_{W^{s,p}}^p = \int_{\dom}\int_{\dom} \frac{|f(x)-f(y)|^p}{|x-y|^{d+sp}} dx dy,
\end{equation*}
and $p^*=p^*(d,s)$ is the `fractional critical exponent' and equal to $dp/(d-sp)$.
Consequently, the space $W^{s,p}(\dom)$ is continuously embedded in $L^q(\dom)$ for any $q\in [1,p^*]$. 
\end{theorem}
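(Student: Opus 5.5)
This is the classical fractional Sobolev inequality on the torus, with $f$ normalized to mean zero (as in every application in Section 3). I would prove it via a Hardy--Littlewood--Sobolev (Riesz potential) representation rather than the rearrangement argument of \cite{DiNezza2012}. The steps are: represent $f$ as a fractional integral of a function controlled by $|f|_{W^{s,p}}$; apply the Hardy--Littlewood--Sobolev inequality on $\T^d$; and handle the endpoint $p=1$ separately.

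\textbf{Representation.} For mean-zero $f$, I would start from the averaging identity
\begin{equation*}
f(x)=\frac{1}{|\T^d|}\int_{\T^d}\big(f(x)-f(y)\big)\,dy.
\end{equation*}
I would then refine it with a dyadic (Campanato-type) decomposition. Let $f_{B_k(x)}$ denote the average of $f$ over the ball $B_k(x)$ of radius $2^{-k}L$. Writing $f(x)-f_{\T^d}$ as the telescoping sum $\sum_k (f_{B_{k+1}(x)}-f_{B_k(x)})$ at Lebesgue points, each increment is bounded by
\begin{equation*}
C\,2^{kd}\int_{B_k(x)}|f(x)-f(y)|\,dy .
\end{equation*}
Defining
\begin{equation*}
g(x)=\Big(\int_{\T^d}\frac{|f(x)-f(y)|^p}{|x-y|^{d+sp}}\,dy\Big)^{1/p},
\end{equation*}
so that $\|g\|_{L^p}=|f|_{W^{s,p}}$, Hölder's inequality on each ball gives an increment of size $C\,2^{-ks}\big(2^{kd}\int_{B_k}|f(x)-f(y)|^p2^{-k(d+sp)}\cdots\big)^{1/p}$. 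This is a pointwise bound, but it is of maximal-function type and not quite a Riesz potential.

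\textbf{Hedberg argument.} A cleaner route is Hedberg's trick. For a scale $\rho$, I would split
\begin{equation*}
|f(x)-f_{B_\rho(x)}|\le \sum_{2^{-k}\le\rho}(\dots),
\end{equation*}
and bound the sum by $C\rho^{s}g(x)$. The coarse part $|f_{B_\rho(x)}|$ is bounded by $\rho^{-d/p}\|f\|_{L^p}$. The catch is that $\|f\|_{L^p}$ must itself be controlled by $|f|_{W^{s,p}}$, which follows from Jensen: for mean-zero $f$, $\|f\|_{L^p}^p\le C\int\!\!\int|f(x)-f(y)|^p\,dx\,dy\le C\,\mathrm{diam}^{d+sp}|f|^p_{W^{s,p}}$.

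Optimizing $\rho$ pointwise yields a weak-type bound. Marcinkiewicz interpolation, or Maz'ya's truncation trick (apply the weak estimate to the level slices $f_k=\min(\max(|f|-2^k,0),2^k)$ and sum, using the disjoint-slab subadditivity of the Gagliardo seminorm), then upgrades it to the strong bound $\|f\|_{L^{p^*}}\le C|f|_{W^{s,p}}$. This also covers $p=1$.

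\textbf{Main obstacle.} I expect the main obstacle to be this weak-to-strong upgrade, since the seminorm is nonlocal. The truncation step needs
\begin{equation*}
\sum_k |f_k|^p_{W^{s,p}}\le C|f|^p_{W^{s,p}},
\end{equation*}
which follows from $|f_k(x)-f_k(y)|\le|f(x)-f(y)|$ together with a counting argument over the pairs of levels. This is exactly the step where I would follow \cite{DiNezza2012}.

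\textbf{Consequence.} The embedding into $L^q$ for $q\le p^*$ follows from Hölder's inequality on the finite-measure torus.
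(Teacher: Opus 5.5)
There is a genuine gap, and it is in the coarse-scale step, not in the weak-to-strong step you flag. Your fine-scale bound $|f(x)-f_{B_\rho(x)}|\le C\rho^s g(x)$ is correct. The coarse bound $|f_{B_\rho(x)}|\le C\rho^{-d/p}\norm{f}_{L^p}\le C L^{s}\rho^{-d/p}|f|_{W^{s,p}}$ is also true, but it differs from the scale-invariant bound by the factor $(L/\rho)^s$, and this loses the critical exponent. Optimizing $C\rho^s g(x)+C\rho^{-d/p}\norm{g}_{L^p}$ over $\rho$ gives only $|f(x)|\le C g(x)^{\frac{d}{d+sp}}\norm{g}_{L^p}^{\frac{sp}{d+sp}}$. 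That is control of $f$ in $L^r$, weak and in fact strong, with $r=p(1+\frac{sp}{d})$, which is strictly smaller than $p^*=p/(1-\frac{sp}{d})$. For example, $d=3$, $s=\frac12$, $p=2$ gives $r=\frac83<3=p^*$.

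None of the proposed upgrades can repair this. Maz'ya's truncation turns weak type into strong type at the same exponent, so it cannot raise $r$ to $p^*$. Marcinkiewicz is not available, because there is no (sub)linear operator taking $g$ to $f$. In addition, the slices $f_k\ge 0$ do not have mean zero, so on $\T^d$ an estimate proved for mean-zero functions cannot be applied to them without a median or Poincar\'e modification. You are right that on $\T^d$ the statement needs mean zero, since constants violate it as written. The paper gives no proof of its own; it quotes the $\R^d$ result of Di Nezza--Palatucci--Valdinoci, which is proved there by a dyadic level-set argument rather than by rearrangement.

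The missing idea is to use the seminorm at every intermediate scale for the coarse part as well. Take $r_0=cL$ and $r_k=2^{-k}r_0$. Telescope the averages from $B_\rho(x)$ up to $\T^d$, but bound each increment by a double average over $B_k\times B_k$ instead of pivoting at $f(x)$:
\[
|f_{B_{k+1}(x)}-f_{B_k(x)}|\le C\Big(r_k^{-2d}\int_{B_k(x)}\int_{B_k(x)}|f(z)-f(w)|^p\,dz\,dw\Big)^{1/p}\le C\,r_k^{s-\frac dp}\,|f|_{W^{s,p}}.
\]
Likewise $|f_{B_0(x)}-f_{\T^d}|\le CL^{s-\frac dp}|f|_{W^{s,p}}$. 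Since $s-\frac dp<0$, the geometric sum is dominated by the smallest scale, so for mean-zero $f$ you get $|f_{B_\rho(x)}|\le C\rho^{s-\frac dp}\norm{g}_{L^p}$. Hence $|f(x)|\le C\rho^s g(x)+C\rho^{s-\frac dp}\norm{g}_{L^p}$. Choosing $\rho=\min\{(\norm{g}_{L^p}/g(x))^{p/d},\,r_0\}$, rounded to a dyadic radius, gives
\[
|f(x)|\le C\,g(x)^{1-\frac{sp}{d}}\norm{g}_{L^p}^{\frac{sp}{d}}+C\,r_0^{s-\frac dp}\norm{g}_{L^p}.
\]
Because $p^*(1-\frac{sp}{d})=p$ and $d+p^*(s-\frac dp)=0$, raising to the power $p^*$ and integrating gives $\norm{f}_{L^{p^*}}\le C\norm{g}_{L^p}=C|f|_{W^{s,p}}$ directly, for every $p\ge 1$. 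Since $g$ itself appears on the right, not a maximal function of $g$, no weak-to-strong upgrade or truncation is needed at all. So the step you identified as the main obstacle disappears, and the coarse-scale step you treated as routine is where the argument actually has to be done carefully.
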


\begin{theorem}[Fractional Sobolev interpolation inequality~\cite{Brezis2018}]
	\label{thm:fractionalinterpolation}
	Let $\Omega \subset \mathbb {R} ^{d}$ be either the whole space, a half-space or a bounded Lipschitz domain. Let $1\leq p,\,p_{1},\,p_{2}\leq \infty$ be three positive extended real quantities and let $s,\,s_{1},\,s_{2}$ be non-negative real numbers. Furthermore, let $\theta \in (0,1)$ and assume that 
	\begin{equation*}
		s_{1}\leq s_{2},\qquad s=\theta s_{1}+(1-\theta )s_{2},\qquad {\dfrac {1}{p}}={\dfrac {\theta }{p_{1}}}+{\dfrac {1-\theta }{p_{2}}}
	\end{equation*} 
	hold. Then, 
	\begin{equation}\label{eq:fractionalGN}
		\|f\|_{W^{s,p}(\Omega )}\leq C_{fS}\|f\|_{W^{s_{1},p_{1}}(\Omega )}^{\theta }\|f\|_{W^{s_{2},p_{2}}(\Omega )}^{1-\theta }
	\end{equation} 
	for any $f\in W^{s_{1},p_{1}}(\Omega )\cap W^{s_{2},p_{2}}(\Omega )$  if and only if
	$$
	{\text{at least one of the following condition}}\quad {\begin{cases}s_{2}\in \mathbb {N} {\text{ and }}s_{2}\geq 1,\\p_{2}=1,\\0<s_{2}-s_{1}\leq 1-\displaystyle {\dfrac {1}{p_{1}}}\end{cases}}\quad {\text{is false.}}
	$$
	The constant $C_{fS}>0$ depends on the parameters $ p,\,p_{1},\,p_{2},\,s,\,s_{1},\,s_{2},\,\theta $, on the domain $ \Omega$, but not on $f$. 
\end{theorem}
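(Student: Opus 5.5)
The natural route is to reduce to $\R^d$, prove the \emph{sufficiency} direction by a Littlewood--Paley argument, and prove the \emph{necessity} direction by explicit counterexamples in the exceptional configuration.

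\emph{Reduction to $\R^d$.} First I will reduce to $\Omega=\R^d$. For a half-space or a bounded Lipschitz domain I take a universal extension operator $E$ (Stein's or Rychkov's), which is bounded simultaneously $W^{\sigma,r}(\Omega)\to W^{\sigma,r}(\R^d)$ for all $\sigma\geq0$ and $1\leq r\leq\infty$. Applying the whole-space inequality to $Ef$ and restricting back to $\Omega$ gives \eqref{eq:fractionalGN} on $\Omega$. The case $s_1=s_2$ is plain H\"older's inequality on the integrands, so I assume $s_1<s_2$.

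\emph{Sufficiency on $\R^d$.} I write $f=\sum_j\Delta_j f$ and set $a_j=2^{js_1}\norm{\Delta_jf}_{L^{p_1}}$, $b_j=2^{js_2}\norm{\Delta_jf}_{L^{p_2}}$. H\"older on each dyadic block gives
\begin{equation*}
2^{js}\norm{\Delta_jf}_{L^p}\leq a_j^\theta b_j^{1-\theta}.
\end{equation*}
Since $\theta p/p_1+(1-\theta)p/p_2=1$, H\"older in $\ell^{\cdot}(j)$ then gives
\begin{equation*}
\norm{f}_{B^s_{p,p}}\lesssim \norm{f}_{B^{s_1}_{p_1,p_1}}^\theta\norm{f}_{B^{s_2}_{p_2,p_2}}^{1-\theta}.
\end{equation*}
When neither $s_i$ is an integer, $W^{s_i,p_i}=B^{s_i}_{p_i,p_i}$, and for non-integer $s$ also $W^{s,p}=B^{s,p}_{p,p}$, so this is already the claim; if $s$ happens to be an integer I need $B^s_{p,\min(p,2)}\subset W^{s,p}$ instead.

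When some $s_i=k_i$ is an integer, $W^{k_i,p_i}$ embeds only in $B^{k_i}_{p_i,\max(p_i,2)}$, so the $\ell$-exponents no longer match. To handle this I will use the room provided by $s_1<s_2$. The idea is to split the dyadic sum at a frequency $2^{J}$ chosen to balance the two norms. Low frequencies are bounded through $f$'s $W^{s_1,p_1}$ information, and high frequencies through the $W^{s_2,p_2}$ information. Bernstein's inequality converts $L^{p_i}$ to $L^p$ on each block, and the geometric factors $2^{j(s-s_i)}$ produce convergent sums.

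For $p_2>1$, or for $s_2-s_1>1-1/p_1$, I expect this to close. An alternative is reiteration: first interpolate $W^{k_2,p_2}$ with $W^{s_1,p_1}$ at a nearby non-integer level, then apply the Besov argument. The point of the condition $s_2-s_1>1-1/p_1$ is that it leaves enough smoothness gap to absorb the loss from $W^{k_2,1}\not\subset B^{k_2}_{1,1}$, whereas $W^{k,1}\subset B^{k}_{1,\infty}$ only. Making this precise is the step I expect to be hardest. I would first try the case $s_2=1$, $p_2=1$ directly. There I would work from the Gagliardo seminorm of $W^{s,p}$, splitting $|x-y|<\lambda$ from $|x-y|\geq\lambda$. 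For small increments I use $\norm{f(\cdot+h)-f}_{L^1}\leq|h|\norm{\Grad f}_{L^1}$ interpolated with the $W^{s_1,p_1}$ modulus; for large increments I use the $W^{s_1,p_1}$ bound alone. I then optimize in $\lambda$. Higher integer $s_2$ reduces to this case by applying it to derivatives.

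\emph{Necessity.} Assume all three conditions hold: $s_2\in\N$ with $s_2\geq1$, $p_2=1$, and $0<s_2-s_1\leq1-1/p_1$. I will build a family that violates the inequality, modelled on $f=\mathbf 1_{\{x_1>0\}}$ times a cutoff. This $f$ lies in $BV$, and in $W^{s_1,p_1}$ exactly when $s_1p_1<1$. Taking $s_2=1$ first, I mollify $f$ at scale $\epsilon$ to get $f_\epsilon$. The norms $\norm{f_\epsilon}_{W^{1,1}}$ and $\norm{f_\epsilon}_{W^{s_1,p_1}}$ stay bounded, or grow only logarithmically in the borderline case $s_1=1/p_1$. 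Meanwhile $\norm{f_\epsilon}_{W^{s,p}}\to\infty$, since $sp\geq1$ by the relation between the exponents. For general $s_2=k$ I will take a $(k-1)$-fold antiderivative of this jump in $x_1$, so that its $(k-1)$-th derivative is the jump. The boundary case $s_2-s_1=1-1/p_1$ needs a logarithmic refinement, such as a lacunary superposition of such jumps at scales $2^{-m}$, to make one side diverge faster than the other.
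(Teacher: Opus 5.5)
The paper gives no proof of this statement. It is quoted from Brezis--Mironescu and only used with $p=p_1=p_2=2$, so your argument has to stand on its own. Three parts are fine: the reduction to $\R^d$, the case $s_1=s_2$, and the blockwise H\"older argument in $B^{\sigma}_{r,r}$ for non-integer $s_1,s_2,s$. The necessity argument and the integer cases of sufficiency both have genuine gaps.

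\textbf{Necessity.} Take $s_2=1$. The exceptional condition $0<1-s_1\le 1-1/p_1$ means $1/p_1\le s_1<1$. So the mollified jump $f_\epsilon$ is never bounded in $W^{s_1,p_1}$ there; your bounded case $s_1p_1<1$ is the non-exceptional one.

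For $s_1p_1>1$ one has $\norm{f_\epsilon}_{W^{s_1,p_1}}\sim\epsilon^{1/p_1-s_1}$, $\norm{f_\epsilon}_{W^{s,p}}\sim\epsilon^{1/p-s}$ and $\norm{f_\epsilon}_{W^{1,1}}\sim 1$. Since $1/p-s=\theta(1/p_1-s_1)$, both sides scale identically and nothing is violated.

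The jump is decisive only for $s_1p_1=1$, which \emph{is} the boundary case $s_2-s_1=1-1/p_1$; you treat these as two different cases. There, for $p_1<\infty$, $\norm{f_\epsilon}_{W^{s,p}}^p\sim\norm{f_\epsilon}_{W^{s_1,p_1}}^{p_1}\sim|\log\epsilon|$, and the ratio of the two sides is $|\log\epsilon|^{1/p-\theta/p_1}=|\log\epsilon|^{1-\theta}$. So the boundary needs no refinement, while the open range $0<s_2-s_1<1-1/p_1$ is not covered at all.

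Superposing jumps does not repair this. Take separated jumps of heights $c_m$ and widths $2^{-m}$, and set $\gamma=s_1-1/p_1$. Then $|f|_{W^{s,p}}^p\approx\sum_m (c_m2^{m\gamma})^{\theta p}c_m^{(1-\theta)p}$. H\"older with $\theta p/p_1+(1-\theta)p=1$ bounds this by $|f|_{W^{s_1,p_1}}^{\theta p}\norm{f'}_{L^1}^{(1-\theta)p}$. Nested jumps at a single point are no better.

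You need a genuinely fractal example that is critical for $(s_1,p_1)$, $(s,p)$ and $(1,1)$ at once. One option:
\begin{itemize}
\item Take the generation-$n$ piecewise-linear approximation $F_n$ of a Cantor staircase built from $2^k$ intervals of length $\ell^k$, with $2=\ell^{-\delta}$ and $\delta=(s_1p_1-1)/(p_1-1)\in(0,1)$.
\item At $|h|\approx \ell^k$ the increments are $\approx 2^{-k}$ on a set of measure $\approx(2\ell)^k$. Hence $\norm{F_n'}_{L^1}=1$ and $|F_n|^r_{W^{\sigma,r}}\approx\sum_{k\le n}\ell^{k(1+\delta(r-1)-\sigma r)}$.
\item The exponent vanishes iff $\sigma=\delta+(1-\delta)/r$. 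This is an affine condition in $(\sigma,1/r)$ that holds at $(s_1,1/p_1)$ and at $(1,1)$, hence at $(s,1/p)$.
\item The ratio of the two sides is therefore $n^{1/p-\theta/p_1}=n^{1-\theta}\to\infty$.
\end{itemize}
Your antiderivative trick then handles $s_2=k$.

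\textbf{Sufficiency.} Outside the all-non-integer case you only give a sketch, and its mechanism fails.
\begin{itemize}
\item If $p_1\neq p_2$, then $p$ lies strictly between them, and Bernstein only raises integrability. On one side of your frequency split you would need to bound $\norm{\Delta_j f}_{L^p}$ by $\norm{\Delta_j f}_{L^{p_i}}$ with $p_i>p$, which is impossible on $\R^d$.
\item Blockwise H\"older avoids this but gives no extra summability in $j$. That is exactly what is missing when a given space embeds only in some $B^{\sigma}_{r,q}$ with $q>r$: $L^r$ or $W^{k,r}$ with $r<2$, or $W^{k,1}$, which lies only in $B^k_{1,\infty}$. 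It is also missing when $s$ is an integer and $p>2$, so that you need $\ell^2$ rather than $\ell^p$.
\end{itemize}

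The standard remedy, due to Oru and used by Brezis--Mironescu, is to sum pointwise in $x$:
$\sum_j 2^{js}|\Delta_jf(x)|\lesssim(\sup_j2^{js_1}|\Delta_jf(x)|)^\theta(\sup_j 2^{js_2}|\Delta_jf(x)|)^{1-\theta}$,
followed by H\"older in $x$. This bounds $F^{s}_{p,1}$ by $F^{s_1}_{p_1,\infty}$ and $F^{s_2}_{p_2,\infty}$. It settles every case in which the given spaces are Triebel--Lizorkin spaces: $W^{k,r}=F^k_{r,2}$ for $1<r<\infty$, and $W^{\sigma,r}=F^{\sigma}_{r,r}$ for non-integer $\sigma$. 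The configurations with $p_i=1$ and integer $s_i$ ($L^1$, $W^{k,1}$) still need separate arguments.

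Your proposed argument for $s_2=p_2=1$ cannot close. After $\omega_p\le\omega_{p_1}^\theta\omega_1^{1-\theta}$ and $\omega_1(h)\le|h|\norm{\Grad f}_{L^1}$, you are left with $\int(|h|^{-s_1}\omega_{p_1}(h))^{\theta p}|h|^{-d}\,dh$. Because $\theta p<p_1$, this is not controlled near $h=0$ by $|f|^{p_1}_{W^{s_1,p_1}}=\int(|h|^{-s_1}\omega_{p_1}(h))^{p_1}|h|^{-d}\,dh$, and no choice of $\lambda$ removes this scale-invariant loss. More fundamentally, your sketch never uses $s_2-s_1>1-1/p_1$. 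The inequality is false in the exceptional range, so an argument that does not invoke this hypothesis cannot be correct.
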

\begin{theorem}[Fractional Gagliardo-Nirenberg-Sobolev inequality \cite{Brezis2019}]
	\label{thm:fractionalGNS}
	Let $\Omega\subset \R^d$ be the whole space, a half-space or a bounded Lipschitz domain.  Let $s_1, s_2, \tilde{r}, p_1, p_2,\tilde{q},\theta, d$ satisfy
	\begin{equation}\label{eq:conditionsGNS}
		\begin{split}
			&0\leq s_1\leq s_2, \, \tilde{r}\geq 0,\, 1\leq p_1,p_2,\tilde{q}\leq \infty,\, (s_1,p_1)\neq (s_2,p_2),\, \theta\in (0,1),\\
			& \tilde{r}<s:= \theta s_1 +(1-\theta) s_2,\, \frac{1}{\tilde{q}}=\left(\frac{\theta}{p_1}+\frac{1-\theta}{p_2}\right)-\frac{s-\tilde{r}}{d}.
		\end{split}
	\end{equation}
	Then the fractional Gagliardo-Nirenberg-Sobolev inequality 
	\begin{equation}
		\label{eq:fractionalGNS}
		\norm{f}_{W^{\tilde{r},\tilde{q}}}\leq C_{GNS} \norm{f}_{W^{s_1,p_1}}^\theta \norm{f}^{1-\theta}_{W^{s_2,p_2}}
	\end{equation}
	holds with the following exceptions, when it fails. 
	\begin{enumerate}
		\item $d=1$, $s_2\geq 1$ is an integer, $1<p_1\leq \infty$, $p_2=1$, $s_1=s_2-1+\frac{1}{p_1}$, and either $[1<p_1<\infty,\, \tilde{r}=s_2-1]$ or $\left[s_2+\frac{\theta}{p_1}-1<\tilde{r}<s_2+\frac{\theta}{p_1}-\theta\right]$.
		\item $N\geq 1$, $s_1<s_2$, $s_1-\frac{N}{p_1} = s_2-\frac{N}{p_2} =\tilde{r}$ is an integer, $\tilde{q}=\infty$, $(p_1,p_2)\neq (\infty,1)$ (for every $\theta\in (0,1)$).
	\end{enumerate}
	
\end{theorem}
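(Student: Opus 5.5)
This result is quoted from \cite{Brezis2019}, and in the paper we only use it as a black box. If I had to prove it, I would work on the whole space $\R^d$ and use Littlewood--Paley theory. For a half-space or a bounded Lipschitz domain $\Omega$, I would first pass to $\R^d$ with a universal (Stein/Rychkov-type) extension operator $E$. Such an operator is bounded $W^{s,p}(\Omega)\to W^{s,p}(\R^d)$ simultaneously for all $s\geq0$ and $1\leq p\leq\infty$. Applying the $\R^d$ inequality to $Ef$ and restricting back then gives \eqref{eq:fractionalGNS} on $\Omega$, with constant depending on $\Omega$ through $\norm{E}$. So the core task is the case $\Omega=\R^d$.

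On $\R^d$ I would identify $W^{s,p}$ with Besov or Triebel--Lizorkin spaces: $W^{s,p}=F^s_{p,2}$ for integer $s$ and $1<p<\infty$, and $W^{s,p}=B^s_{p,p}$ for non-integer $s$. The interpolation step is then done frequency by frequency. Decompose $f=\sum_j\Delta_j f$ and use Bernstein's inequality, $\norm{\Delta_j f}_{L^{\tilde q}}\lesssim 2^{jd(1/p-1/\tilde q)}\norm{\Delta_j f}_{L^p}$, to estimate each block in two ways:
\begin{equation*}
2^{j\tilde r}\norm{\Delta_j f}_{L^{\tilde q}}\lesssim 2^{j(\tilde r - s_1 + d/p_1-d/\tilde q)}\, 2^{js_1}\norm{\Delta_jf}_{L^{p_1}},
\qquad
2^{j\tilde r}\norm{\Delta_j f}_{L^{\tilde q}}\lesssim 2^{j(\tilde r - s_2 + d/p_2-d/\tilde q)}\, 2^{js_2}\norm{\Delta_jf}_{L^{p_2}}.
\end{equation*}
The scaling relation in \eqref{eq:conditionsGNS} forces the two exponents to have opposite signs. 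I would use the first bound for $j\leq J$ and the second for $j>J$, sum the two geometric series, and optimize over $J$. This yields $\norm{f}_{B^{\tilde r}_{\tilde q,\infty}}\lesssim \norm{f}_{B^{s_1}_{p_1,\infty}}^\theta\norm{f}_{B^{s_2}_{p_2,\infty}}^{1-\theta}$, where $s=\theta s_1+(1-\theta)s_2$ and $\tilde r<s$ is used. To upgrade the microscopic index from $\infty$ to the one required by $W^{\tilde r,\tilde q}$, I would use the strict inequality $\tilde r<s$, which leaves room for an arbitrary loss of summability. This lets me replace $\tilde r$ by a slightly larger $\tilde r'\in(\tilde r,s)$ and embed $B^{\tilde r'}_{\tilde q',\infty}\hookrightarrow W^{\tilde r,\tilde q}$ by Sobolev embedding. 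Here $\tilde q'$ is chosen so that the scaling is preserved, possibly at the cost of using a different $\theta'$ and then reinterpolating.

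I expect the main obstacle to be the endpoint cases $p_i\in\{1,\infty\}$, $\tilde q=\infty$, and integer $s_2$. In these cases the Besov/Triebel identifications and the Sobolev embeddings fail, and these are exactly where the listed exceptions live. My approach would be to treat $p_2=1$ and $\tilde q=\infty$ separately. I would replace Bernstein-plus-embedding by a direct argument using the Gagliardo seminorm and a layer-cake or Hardy-type inequality, in the spirit of the original Gagliardo--Nirenberg proof. Along the way I would check that the argument breaks down precisely under conditions 1 and 2. I would confirm that these genuinely fail by scaling counterexamples: for condition 2, a logarithmic function $\log\log(1/|x|)$ truncated near the origin; for condition 1, functions of bounded variation with a jump. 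This gives the ``if and only if'' character of the exceptions.
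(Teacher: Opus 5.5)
The paper gives no proof of this theorem; it imports it from \cite{Brezis2019}. Your sketch therefore has to stand on its own. The reduction to $\R^d$ via a universal extension operator and the Littlewood--Paley set-up are fine, but the core step has a genuine gap.

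Write $e_i=\tilde r-s_i+d/p_i-d/\tilde q$ for your two dyadic exponents. The scaling condition in \eqref{eq:conditionsGNS} gives only $\theta e_1+(1-\theta)e_2=0$, while $e_1-e_2=(s_2-d/p_2)-(s_1-d/p_1)$. So when $s_1-d/p_1=s_2-d/p_2$, both exponents vanish rather than having opposite signs. Splitting at $J$ then gives no geometric decay, only a $B^{\tilde r}_{\tilde q,\infty}$ bound.

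Your upgrade step cannot repair this. In that regime $s_1-d/p_1$, $s_2-d/p_2$, $s-d/p$ and $\tilde r-d/\tilde q$ all coincide, so every $(\tilde r',\tilde q',\theta')$ compatible with scaling stays on the same Sobolev line. Besov embeddings along a fixed line never improve the third index. Hence for $\tilde q<\infty$ a $B_{\cdot,\infty}$ bound never reaches $W^{\tilde r,\tilde q}$; as written, the step asks for preserved scaling and a strict gain at the same time.

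This equal-index regime is not marginal. Both listed exceptions lie in it (in exception~1, $s_1-1/p_1=s_2-1=s_2-1/p_2$). So do many true cases, e.g.\ $1<p_1,p_2<\infty$ with $\tilde q<\infty$, which your argument does not reach. The missing ingredient is to use the smoothness gap $s_1<s_2$ pointwise in $x$; this gap is automatic here, since $(s_1,p_1)\neq(s_2,p_2)$.
\begin{itemize}
\item Split $\sum_j 2^{js}|\Delta_j f(x)|$ at an $x$-dependent level and apply H\"older in $x$. This gives $\norm{f}_{F^{s}_{p,1}}\lesssim\norm{f}^{\theta}_{F^{s_1}_{p_1,\infty}}\norm{f}^{1-\theta}_{F^{s_2}_{p_2,\infty}}$.
\item The Sobolev/Jawerth embeddings out of Triebel--Lizorkin spaces are insensitive to the third index, so they then land in $W^{\tilde r,\tilde q}$.
\end{itemize}
That route requires $W^{s_i,p_i}\hookrightarrow F^{s_i}_{p_i,\infty}$, and $\tilde q<\infty$ when $\tilde r$ is an integer. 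It is unavailable exactly for $W^{k,1}$ with $k\geq1$ an integer, and for $\tilde q=\infty$ with $\tilde r$ an integer. Those are where exceptions~1 and~2 sit. They need separate arguments (dimension-sensitive ones for $W^{k,1}$) that ``a layer-cake or Hardy-type inequality'' does not supply.

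The non-degenerate case also needs a repair. Bernstein's inequality only raises integrability, so your two block estimates require $\tilde q\geq\max(p_1,p_2)$. The hypotheses give only $\tilde q>p$ with $1/p=\theta/p_1+(1-\theta)/p_2$. For instance, $d=1$, $(s_1,p_1)=(0,1)$, $(s_2,p_2)=(1,\infty)$, $\theta=\tfrac12$, $\tilde r=\tfrac25$ is admissible and gives $\tilde q=\tfrac52<p_2$. On $\R$, $\norm{\Delta_j f}_{L^{5/2}}$ is not controlled by $\norm{\Delta_j f}_{L^\infty}$.

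The fix is to interpolate each block first, $\norm{\Delta_j f}_{L^{p_\lambda}}\leq\norm{\Delta_j f}_{L^{p_1}}^{\lambda}\norm{\Delta_j f}_{L^{p_2}}^{1-\lambda}$ with $1/p_\lambda=\lambda/p_1+(1-\lambda)/p_2$. Then apply Bernstein from $p_\lambda$ to $\tilde q$ with $\lambda=\theta\pm\delta$; the two exponents become $\pm\delta(e_1-e_2)$.

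Whenever $e_1\neq e_2$, your geometric sums already give $\ell^1$ summability, i.e.\ a bound in $B^{\tilde r}_{\tilde q,1}\hookrightarrow W^{\tilde r,\tilde q}$. So no upgrade step is needed there. In particular, the instance the paper actually uses is $p_1=p_2=2$, $s_1=\tilde r=0<s_2=\alpha$. There Bernstein applies and $e_1-e_2=\alpha\neq0$, so your splitting argument (with periodic Littlewood--Paley blocks) is complete as it stands.
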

\begin{theorem}[Fractional version of Morrey's inequality \cite{DiNezza2012}]
	\label{thm:fractionalmorrey}
	Let $\Omega\subset \R^d$ a Lipschitz domain and $p\in [1,\infty)$, $s\in (0,1)$ be such that $sp>d$. Then there exists $C>0$ depending on $d, s, p$ and $\Omega$ such that
	\begin{equation*}
		\norm{f}_{C^{0,\alpha}(\Omega)}\leq C \norm{f}_{W^{s,p(\Omega)}},
	\end{equation*}
	for any $f\in L^p$ and $\alpha:= (sp-d)/p$.
\end{theorem}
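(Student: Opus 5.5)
The plan is to prove the estimate by the Campanato approach: control mean oscillations of $f$ on small balls by the Gagliardo seminorm, then telescope over dyadic scales. To avoid boundary issues, we first reduce to $\Omega=\R^d$. For a Lipschitz domain, $W^{s,p}(\Omega)$ admits a bounded extension operator $E:W^{s,p}(\Omega)\to W^{s,p}(\R^d)$ (see~\cite{DiNezza2012}). It therefore suffices to show $\norm{f}_{C^{0,\alpha}(\R^d)}\le C\norm{f}_{W^{s,p}(\R^d)}$ and then restrict $Ef$ to $\Omega$. For the torus setting used in the paper, one can equally work with periodic functions on a fundamental cell.

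\textbf{Step 1 (mean oscillation).} For a ball $B=B_r(x_0)$ write $f_B$ for the average of $f$ over $B$. By Jensen's inequality,
\begin{equation*}
\frac{1}{|B|}\int_B |f-f_B|^p\,dx \le \frac{1}{|B|^2}\int_B\int_B |f(x)-f(y)|^p\,dx\,dy .
\end{equation*}
For $x,y\in B$ we have $|x-y|\le 2r$, so we may insert the factor $|x-y|^{d+sp}/|x-y|^{d+sp}$. This gives the bound
\begin{equation*}
\frac{1}{|B|}\int_B |f-f_B|^p\,dx\le C r^{-2d}(2r)^{d+sp}|f|_{W^{s,p}}^p = C r^{sp-d}|f|^p_{W^{s,p}} .
\end{equation*}
Hence the $L^p$-mean oscillation on $B_r$ is at most $C r^{\alpha}|f|_{W^{s,p}}$, with $\alpha=(sp-d)/p>0$.

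\textbf{Step 2 (telescoping and H\"older bound).} Since $B_{r/2}(x_0)\subset B_r(x_0)$,
\begin{equation*}
|f_{B_{r/2}}-f_{B_r}|\le \Big(\tfrac{1}{|B_{r/2}|}\int_{B_{r/2}}|f-f_{B_r}|^p\Big)^{1/p}\le C r^\alpha |f|_{W^{s,p}} .
\end{equation*}
Because $\alpha>0$, summing over the dyadic radii $r2^{-k}$ gives a convergent geometric series. So the averages $f_{B_{r2^{-k}}(x_0)}$ form a Cauchy sequence, and their limit equals $f(x_0)$ at every Lebesgue point. We therefore obtain $|f(x_0)-f_{B_r(x_0)}|\le C r^\alpha |f|_{W^{s,p}}$.

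Now take $x,y$ with $|x-y|=r$ and compare both $f_{B_r(y)}$ and $f_{B_r(x)}$ with $f_{B_{2r}(x)}$; note that $B_r(y)\subset B_{2r}(x)$ and the radii are comparable. Applying Step 1 on $B_{2r}(x)$ gives $|f(x)-f(y)|\le C|x-y|^\alpha|f|_{W^{s,p}}$. This holds off a null set, so $f$ has a H\"older continuous representative.

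\textbf{Step 3 (sup bound).} Fix $R=1$. Then
\begin{equation*}
|f(x)|\le |f_{B_1(x)}|+C|f|_{W^{s,p}}\le C\norm{f}_{L^p}+C|f|_{W^{s,p}},
\end{equation*}
using H\"older's inequality on the unit ball. Together with Step 2 this yields the full $C^{0,\alpha}$ norm bound, with a constant depending only on $d,s,p$ and, through the extension operator, on $\Omega$.

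\textbf{Main obstacle.} The delicate point is the domain. If one avoids the extension operator, the balls in Steps 1 and 2 must be replaced by $B_r(x)\cap\Omega$. The argument then needs a uniform measure-density condition $|B_r(x)\cap\Omega|\ge c r^d$ for $r\le r_0$, which holds for Lipschitz domains via the interior cone condition. It also needs a chain of comparable balls joining $x$ to $y$ inside $\Omega$. I would first try to rely on the extension theorem, and fall back on the cone-condition argument only if a self-contained proof is required.
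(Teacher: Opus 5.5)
Your proof is correct and is essentially the argument behind the cited result. The paper gives no proof of its own but quotes \cite[Thm.~8.2]{DiNezza2012}, which is proved by the same Jensen estimate $\int_{B_\rho}|f-f_{B_\rho}|^p\,dx\le C\rho^{sp}|f|_{W^{s,p}}^p$ followed by Campanato's characterization of $C^{0,(sp-d)/p}$; your dyadic telescoping in Steps 2--3 is a hands-on proof of that Campanato embedding.

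The remaining differences are cosmetic. You use the extension operator to work on $\R^d$, whereas the cited proof uses a measure-density (``no external cusps'') bound $|B_\rho(x)\cap\Omega|\ge c\rho^d$. Also, in your fallback no chain of balls is needed: the Gagliardo seminorm is nonlocal, so the Jensen estimate holds on $B_{2r}(x)\cap\Omega$ whether or not that set is connected.
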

\bibliographystyle{abbrv}
\bibliography{mixing}

@Article {AlbertiCrippaMazzucato19,
    AUTHOR = {Alberti, Giovanni and Crippa, Gianluca and Mazzucato, Anna L.},
     TITLE = {Exponential self-similar mixing by incompressible flows},
   JOURNAL = {J. Amer. Math. Soc.},
  FJOURNAL = {Journal of the American Mathematical Society},
    VOLUME = {32},
      YEAR = {2019},
    NUMBER = {2},
     PAGES = {445--490},
      ISSN = {0894-0347},
   MRCLASS = {35Q35 (76F25)},
  MRNUMBER = {3904158},
MRREVIEWER = {Maurizio Brocchini},
       DOI = {10.1090/jams/913},
       URL = {https://doi.org/10.1090/jams/913},
}

@Article{bressan2003lemma,
  title={A lemma and a conjecture on the cost of rearrangements},
  author={Bressan, Alberto},
  journal={Rendiconti del Seminario Matematico della Universita di Padova},
  volume={110},
  pages={97--102},
  year={2003}
}

@Article {Brezis2018,
	AUTHOR = {Brezis, Ha\"im and Mironescu, Petru},
	TITLE = {Gagliardo-{N}irenberg inequalities and non-inequalities: the
	full story},
	JOURNAL = {Ann. Inst. H. Poincar\'e{} C Anal. Non Lin\'eaire},
	FJOURNAL = {Annales de l'Institut Henri Poincar\'e{} C. Analyse Non
	Lin\'eaire},
	VOLUME = {35},
	YEAR = {2018},
	NUMBER = {5},
	PAGES = {1355--1376},
	ISSN = {0294-1449,1873-1430},
	MRCLASS = {35A23 (26D10)},
	MRNUMBER = {3813967},
	MRREVIEWER = {Jean\ Van Schaftingen},
	DOI = {10.1016/j.anihpc.2017.11.007},
	URL = {https://doi.org/10.1016/j.anihpc.2017.11.007},
}

@Article {Brezis2019,
	AUTHOR = {Brezis, Ha\"im and Mironescu, Petru},
	TITLE = {Where {S}obolev interacts with {G}agliardo-{N}irenberg},
	JOURNAL = {J. Funct. Anal.},
	FJOURNAL = {Journal of Functional Analysis},
	VOLUME = {277},
	YEAR = {2019},
	NUMBER = {8},
	PAGES = {2839--2864},
	ISSN = {0022-1236,1096-0783},
	MRCLASS = {46E35},
	MRNUMBER = {3990737},
	MRREVIEWER = {Jes\'us\ A.\ Jaramillo},
	DOI = {10.1016/j.jfa.2019.02.019},
	URL = {https://doi.org/10.1016/j.jfa.2019.02.019},
}

@Article{brue2024sharp,
  title={{Sharp Nonuniqueness in the Transport Equation with Sobolev Velocity Field}},
  author={Bru{\`e}, Elia and Colombo, Maria and Kumar, Anuj},
  journal={arXiv preprint arXiv:2405.01670},
  year={2024}
}

@Article{caldwell1995turbulence,
  title={Turbulence and mixing in the ocean},
  author={D. R. Caldwell and J. N. Mourn},
  journal={Rev. Geophys. },
  volume={33},
  number={S2},
  pages={1385--1394},
  year={1995},
  publisher={Wiley Online Library}
}

@Article{crippa2008regularity,
  title={Regularity and compactness for the DiPerna--Lions flow},
  author={Crippa, Gianluca and De Lellis, Camillo},
  journal={Hyperbolic problems: theory, numerics, applications},
  pages={423--430},
  year={2008},
  publisher={Springer}
}

@Article{depauw2003non,
  title={{Non unicit{\'e} des solutions born{\'e}es pour un champ de vecteurs BV en dehors d'un hyperplan}},
  author={Depauw, Nicolas},
  journal={Comptes rendus. Math{\'e}matique},
  volume={337},
  number={4},
  pages={249--252},
  year={2003},
  doi = {10.1016/S1631-073X(03)00330-3}
}

@Article {DiNezza2012,
	AUTHOR = {Di Nezza, Eleonora and Palatucci, Giampiero and Valdinoci,
	Enrico},
	TITLE = {Hitchhiker's guide to the fractional {S}obolev spaces},
	JOURNAL = {Bull. Sci. Math.},
	FJOURNAL = {Bulletin des Sciences Math\'ematiques},
	VOLUME = {136},
	YEAR = {2012},
	NUMBER = {5},
	PAGES = {521--573},
	ISSN = {0007-4497,1952-4773},
	MRCLASS = {46E35 (35A23 35S05 35S30)},
	MRNUMBER = {2944369},
	MRREVIEWER = {Lanzhe\ Liu},
	DOI = {10.1016/j.bulsci.2011.12.004},
	URL = {https://doi.org/10.1016/j.bulsci.2011.12.004},
}

@Article{doering2019optimal,
  title={On the Optimal Design of Wall-to-Wall Heat Transport},
  author={Doering, Charles R and Tobasco, Ian},
  journal={Comm. Pure Appl. Math.},
  volume={72},
  number={11},
  pages={2385--2448},
  year={2019},
  publisher={Wiley Online Library},
  DOI = {10.1002/cpa.21832}
}

@Article {drivas22anomdissp,
    AUTHOR = {Drivas, Theodore D. and Elgindi, Tarek M. and Iyer, Gautam and
              Jeong, In-Jee},
     TITLE = {Anomalous dissipation in passive scalar transport},
   JOURNAL = {Arch. Ration. Mech. Anal.},
  FJOURNAL = {Archive for Rational Mechanics and Analysis},
    VOLUME = {243},
      YEAR = {2022},
    NUMBER = {3},
     PAGES = {1151--1180},
      ISSN = {0003-9527},
   MRCLASS = {76F02 (76R50)},
  MRNUMBER = {4381138},
       DOI = {10.1007/s00205-021-01736-2},
       URL = {https://doi.org/10.1007/s00205-021-01736-2},
}

@Article{drivas2022self,
  title={{Self-regularization in turbulence from the Kolmogorov 4/5-law and alignment}},
  author={Drivas, Theodore D},
  journal={Philosophical Transactions of the Royal Society A},
  volume={380},
  number={2226},
  pages={20210033},
  year={2022},
  publisher={The Royal Society}
}

@Article {elgindiuniversalmixer,
    AUTHOR = {Elgindi, Tarek M. and Zlato\v{s}, Andrej},
     TITLE = {Universal mixers in all dimensions},
   JOURNAL = {Adv. Math.},
  FJOURNAL = {Advances in Mathematics},
    VOLUME = {356},
      YEAR = {2019},
     PAGES = {106807, 33},
      ISSN = {0001-8708},
   MRCLASS = {35Q30 (35F05 37B20)},
  MRNUMBER = {4008523},
MRREVIEWER = {Beno\^{\i}t P. Desjardins},
       DOI = {10.1016/j.aim.2019.106807},
       URL = {https://doi.org/10.1016/j.aim.2019.106807},
}

@article{elgindi2023norm,
  title={Norm growth, non-uniqueness, and anomalous dissipation in passive scalars},
  author={Elgindi, Tarek M and Liss, Kyle},
  journal={Archive for Rational Mechanics and Analysis},
  volume={248},
  number={6},
  pages={120},
  year={2024},
  publisher={Springer}
}

@article{escauriaza20023,
  title={On L 3, Infinity-solutions to the Navier-Stokes equations and backward uniqueness},
  author={Escauriaza, Luis and Seregin, Gregory and Sverak, Vladimir},
  year={2002}
}

@Article{iyer2014lower,
  title={Lower bounds on the mix norm of passive scalars advected by incompressible enstrophy-constrained flows},
  author={Iyer, Gautam and Kiselev, Alexander and Xu, Xiaoqian},
  journal={Nonlinearity},
  volume={27},
  number={5},
  pages={973},
  year={2014},
  publisher={IOP Publishing}
}

@Article{kaneda2003energy,
  title={Energy dissipation rate and energy spectrum in high resolution direct numerical simulations of turbulence in a periodic box},
  author={Kaneda, Yukio and Ishihara, Takashi and Yokokawa, Mitsuo and Itakura, Ken’ichi and Uno, Atsuya},
  journal={Physics of Fluids},
  volume={15},
  number={2},
  pages={L21--L24},
  year={2003},
  publisher={American Institute of Physics}
}

@article{kumar2024almost,
  title={Almost anomalous dissipation in advection-diffusion of a divergence-free passive vector},
  author={Kumar, Anuj},
  journal={arXiv preprint arXiv:2409.15000},
  year={2024}
}

@article{kumar2022three,
  title={Three dimensional branching pipe flows for optimal scalar transport between walls},
  author={Kumar, Anuj},
  journal={Nonlinearity},
  volume={37},
  number={11},
  pages={115011},
  year={2024},
  publisher={IOP Publishing}
}

@article{kumar2023nonuniqueness,
  title={Nonuniqueness of trajectories on a set of full measure for Sobolev vector fields},
  author={Kumar, Anuj},
  journal={Archive for Rational Mechanics and Analysis},
  volume={248},
  number={6},
  pages={114},
  year={2024},
  publisher={Springer}
}

@Article {Lin2011,
    AUTHOR = {Lin, Zhi and Thiffeault, Jean-Luc and Doering, Charles R.},
     TITLE = {Optimal stirring strategies for passive scalar mixing},
   JOURNAL = {J. Fluid Mech.},
  FJOURNAL = {Journal of Fluid Mechanics},
    VOLUME = {675},
      YEAR = {2011},
     PAGES = {465--476},
      ISSN = {0022-1120,1469-7645},
   MRCLASS = {76D55 (35K60 49N90 76W05)},
  MRNUMBER = {2801050},
MRREVIEWER = {Jean-Pierre\ Raymond},
       DOI = {10.1017/S0022112011000292},
       URL = {https://doi.org/10.1017/S0022112011000292},
}

@Article{munk1998abyssal,
  title={Abyssal recipes {II}: {Energetics} of tidal and wind mixing},
  author={W. Munk and C. Wunsch},
  journal={Deep Sea Research Part I: Oceanographic Research Papers},
  volume={45},
  number={12},
  pages={1977--2010},
  year={1998},
  publisher={Elsevier}
}

@article{ottino1994mixing,
  title={Mixing and chemical reactions a tutorial},
  author={Ottino, Julio M},
  journal={Chemical Engineering Science},
  volume={49},
  number={24},
  pages={4005--4027},
  year={1994},
  publisher={Elsevier}
}

@Article{pearson2002measurements,
  title={Measurements of the turbulent energy dissipation rate},
  author={Pearson, Bruce R and Krogstad, P-{\AA} and van de Water, Willem},
  journal={Physics of fluids},
  volume={14},
  number={3},
  pages={1288--1290},
  year={2002},
  publisher={American Institute of Physics}
}

@article{sreenivasan1984scaling,
  title={On the scaling of the turbulence energy dissipation rate},
  author={K. R. Sreenivasan},
  journal={Phys. Fluids},
  volume={27},
  number={5},
  pages={1048--1051},
  year={1984},
  publisher={American Institute of Physics}
}

@article{sreenivasan1998update,
  title={An update on the energy dissipation rate in isotropic turbulence},
  author={Sreenivasan, Katepalli R},
  journal={Physics of Fluids},
  volume={10},
  number={2},
  pages={528--529},
  year={1998},
  publisher={American Institute of Physics}
}

@Article{seis2013maximal,
  title={Maximal mixing by incompressible fluid flows},
  author={Seis, Christian},
  journal={Nonlinearity},
  volume={26},
  number={12},
  pages={3279},
  year={2013},
  publisher={IOP Publishing}
}

@Article{thiffeault2012using,
  title={Using multiscale norms to quantify mixing and transport},
  author={Thiffeault, Jean-Luc},
  journal={Nonlinearity},
  volume={25},
  number={2},
  pages={R1--R44},
  year={2012}
}

@Article{tobasco2021optimal,
  title={Optimal cooling of an internally heated disc},
  author={Tobasco, Ian},
  journal={Philosophical Transactions of the Royal Society A: Mathematical, Physical and Engineering Sciences},
  volume={380},
  number={2225},
  year={2022},
  publisher={The Royal Society}
}

@Article{wunsch2004vertical,
  title={Vertical mixing, energy, and the general circulation of the oceans},
  author={C. Wunsch and R. Ferrari},
  journal={Annu. Rev. Fluid Mech.},
  volume={36},
  pages={281--314},
  year={2004},
  publisher={Annual Reviews}
}

@Article {YaoZlatos17,
    AUTHOR = {Yao, Yao and Zlato\v{s}, Andrej},
     TITLE = {Mixing and un-mixing by incompressible flows},
   JOURNAL = {J. Eur. Math. Soc. (JEMS)},
  FJOURNAL = {Journal of the European Mathematical Society (JEMS)},
    VOLUME = {19},
      YEAR = {2017},
    NUMBER = {7},
     PAGES = {1911--1948},
      ISSN = {1435-9855},
   MRCLASS = {76F25 (35B45 35L65 35Q35)},
  MRNUMBER = {3656475},
       DOI = {10.4171/JEMS/709},
       URL = {https://doi.org/10.4171/JEMS/709},
}

@article {Simon1987,
    AUTHOR = {Simon, Jacques},
     TITLE = {Compact sets in the space {$L^p(0,T;B)$}},
   JOURNAL = {Ann. Mat. Pura Appl. (4)},
  FJOURNAL = {Annali di Matematica Pura ed Applicata. Serie Quarta},
    VOLUME = {146},
      YEAR = {1987},
     PAGES = {65--96},
      ISSN = {0003-4622},
   MRCLASS = {46E40 (46E30)},
  MRNUMBER = {916688},
MRREVIEWER = {James\ Bell\ Cooper},
       DOI = {10.1007/BF01762360},
       URL = {https://doi.org/10.1007/BF01762360},
}

@book{frisch1995turbulence,
  title={{Turbulence: the legacy of AN Kolmogorov}},
  author={Frisch, Uriel},
  year={1995},
  publisher={Cambridge university press}
}

@book{paul2004handbook,
  title={Handbook of industrial mixing},
  author={Paul, Edward L and Atiemo-Obeng, Victor A and Kresta, Suzanne M and others},
  year={2004},
  publisher={Wiley Online Library}
}

\end{document}